\documentclass[11pt,letterpaper]{amsart}
\usepackage{mathtools, bbm} 


\usepackage{amsmath}
\usepackage{bm}
\usepackage{mathrsfs}
\usepackage{amssymb}
\usepackage{xcolor}
\usepackage{amscd} 
\usepackage{tikz}
\usepackage{graphicx}
\usepackage{subcaption}

\usepackage{hyperref}
\usepackage{cleveref} 
\usepackage{comment}
\usepackage[margin=1in]{geometry} 

\theoremstyle{plain}
 \newtheorem{thm}{\textbf{Theorem}}[section]
 \newtheorem{prop}[thm]{\textbf{Proposition}}
 \newtheorem{lem}[thm]{\textbf{Lemma}}
 \newtheorem{cor}[thm]{\textbf{Corollary}}
 \newtheorem{conj}[thm]{\textbf{Conjecture}}
\theoremstyle{definition}
 \newtheorem{ex}[thm]{\textbf{Example}}
 \newtheorem{dfn}[thm]{\textbf{Definition}}
\theoremstyle{remark}
 \newtheorem{rem}[thm]{\textbf{Remark}}
 \numberwithin{equation}{section}

\DeclareMathOperator{\rk}{rank}
\DeclareMathOperator{\ch}{ch}
\DeclareMathOperator{\Pch}{Pch}
\DeclareMathOperator{\inv}{inv}
\DeclareMathOperator{\ps}{ps}
\DeclareMathOperator{\Des}{Des}
\DeclareMathOperator{\Asc}{Asc}
\DeclareMathOperator{\Whit}{W\!H}

\DeclareMathOperator{\sym}{\mathfrak{S}}

\newcommand{\bQ}{\mathbb{Q}}

\newcommand{\bZ}{\mathbb{Z}}
\newcommand{\bF}{\mathbb{F}}



\newcommand{\boldvec}[1]{\underline{\bm{#1}}}

\newcommand\coveredby{\mathrel{\ooalign{$<$\cr
  \hidewidth\raise0.0ex\hbox{$\cdot\mkern2mu$}\cr}}}
\newcommand\covers{\mathrel{\ooalign{$>$\cr
  \hidewidth\raise0.0ex\hbox{$\cdot\mkern7mu$}\cr}}}

\renewcommand{\le}{\leqslant}
\renewcommand{\ge}{\geqslant}
\renewcommand{\setminus}{\smallsetminus}


 \author[Li and Sundaram]{Yifei Li and Sheila Sundaram}

\address{Yifei Li: University of Illinois at Springfield, Springfield, IL 62703  USA} 
\email{yli236@uis.edu}
 \address{Sheila Sundaram:  University of Minnesota, Minneapolis, MN 55455, USA}
\email{shsund@umn.edu}


\title[Boolean Segre powers]{Homology of Segre powers of Boolean and subspace  lattices}
\date{\today}

\begin{document}

\keywords{ascent, Boolean lattice,  Frobenius characteristic, principal specialization, subspace lattice, Segre product of posets, Whitney homology}
\subjclass{05E10, 05E05, 20C30, 55P99}

\begin{abstract} Segre products of posets were defined by Bj\"orner and Welker (2005).
We investigate the homology representations of the $t$-fold Segre power $B_n^{(t)}$ of the Boolean lattice $B_n$.   The direct product $\sym_n^{\times t}$ of the symmetric group $\sym_n$  acts on the homology of  rank-selected subposets of $B_n^{(t)}$. We  give an explicit formula for the decomposition into $\sym_n^{\times t}$-irreducibles of the homology of the full poset, as well as  formulas for  the diagonal action of the symmetric group $\sym_n$. For  the rank-selected homology, we show that the stable principal specialisation of the product Frobenius characteristic of the $\sym_n^{\times t}$-module coincides with the corresponding rank-selected invariant of the $t$-fold Segre power of the subspace lattice. 
\end{abstract}

\maketitle

\section{Introduction}\label{sec:Intro}
 Let $B_n$ denote the Boolean lattice of subsets of an $n$-element set, and let $B_{n,q}$ denote the lattice of subspaces of an $n$-dimensional vector space over the finite field  $\bF_q$ with  $q$ elements. 

The \emph{Segre product}  of  posets   was first defined by Bj\"orner and Welker, who showed \cite[Theorem 1]{Segre_rees} that this operation preserves the property of being homotopy Cohen-Macaulay.  Let $P^{(t)}$ denote the $t$-fold Segre power $P\circ \cdots \circ P$ ($t$ factors) of a graded poset $P$. The  Segre square $P\circ P$ was studied by the first author  in \cite{YLiqCSV2023}, when $P$ is the Boolean lattice $B_n$ or the subspace lattice $B_{n,q}$.  Segre powers of the subspace lattice $B_{n,q}$ appear in an early paper of Stanley \cite[Ex. 1.2]{RPS-BinomialPosetsJCTA1976},  as an example of  binomial posets. See also \cite[Ex. 3.18.3]{ec1}.

The symmetric group $\sym_n$ acts on $B_n$, and hence the Segre power $B_n^{(t)}$ of $B_n$ carries two actions, one for the $t$-fold direct product $\sym_n^{\times t}$ of $\sym_n$ with itself, and the other for the symmetric group $\sym_n$.

 In this paper we study both these actions on the rank-selected subposets of the Cohen-Macaulay poset  $B_n^{(t)}$, giving  formulas for the irreducible decomposition  on the top homology of  $B^{(t)}_n$.  For the $t$-fold Segre power of the subspace lattice, 
a special case of a theorem of Stanley \cite[Theorem 3.1]{RPS-BinomialPosetsJCTA1976} shows 
that the M\"obius number of $B_{n,q}^{(t)}$ is given by 
$(-1)^nW^{(t)}_n(q)$ where 
$$ W^{(t)}_n(q)
:=\sum_{(\sigma_1,\ldots \sigma_t)\in  \sym_n^{\times t}}\, 
\prod_{i=1}^t q^{\inv(\sigma_i)}.$$ Here 
$\inv(\tau)$ is the number of inversions of the permutation $\tau$, 
and the sum is over all $t$-tuples  of permutations in $\sym_n$ with no common ascent, $i$ being an ascent of a permutation $\sigma$ if $\sigma(i)<\sigma(i+1)$.  When $q=1$ this specialises to   
the dimension of the homology of $B_n^{(t)}$; it is 
the number $w_n^{(t)}$ of $t$-tuples  of permutations in the symmetric group $\sym_n$ with no common ascent.  The numbers $w_n^{(2)}$ first appear in work of Carlitz, Scoville and Vaughn \cite{CSV}. 
For arbitrary $t$ the numbers $w_n^{(t)}$ have also already appeared in the literature; see Abramson and Promislow  \cite{AbramsonPromislowJCTA1978}. This suggests  a deeper connection between the homology modules of the Segre powers of the subspace lattice and the Boolean lattice.  

The paper is organised as follows. Prerequisites are reviewed in Section~\ref{sec:Segre-EL}. 
In order to describe the rank-selected homology modules of $B_n^{(t)}$ for the $\sym_n^{\times t}$-action, in  Section~\ref{sec:prod_Frob} 
we develop an extension of the \emph{product Frobenius characteristic} introduced in \cite{YLiqCSV2023}. In Section~\ref{sec:tfold-Boolean-lattice-repn}  we use  the Whitney homology technique of  \cite{SundaramAIM1994} to  derive recursive formulas for these representations involving symmetric functions in $t$ sets of variables.   Section~\ref{sec:rank-selection} extends these results to the action of $\sym_n^{\times t}$ on the   chains and  homology of all  rank-selected \cite{RPSGaP1982} subposets of $B^{(t)}_n$. Finally in Section~\ref{sec:stable-ps} we use these formulas to investigate the stable principal specialisations of the rank-selected representations of $B^{(t)}_n$, and establish a connection with the corresponding rank-selected invariants of the $t$-fold Segre power $B_{n,q}^{(t)}$ of the subspace lattice  $B_{n,q}$.   For the top homology this fact was established in \cite{YLiqCSV2023}, in the case $t=2$.

Our framework allows us to obtain explicit formulas for the homology representation of $B_n^{(t)}$.
A key feature of these formulas is Definition~\ref{def:map-Phi-S}, where we introduce an injective algebra homomorphism 
  $\Phi_t:\Lambda_n(x) \rightarrow \otimes_{j=1}^t \Lambda_n(X^j)$  from the algebra of  symmetric functions of homogeneous degree $n$ in a single set of variables, to the tensor product of the algebras of degree $n$-symmetric functions in $t$ sets of variables.  We show that $\Phi_t$ maps the elementary symmetric function $e_n$ to the product Frobenius characteristic $\beta^{(t)}_n$ of the top homology of $B^{(t)}_n$.    By exploiting properties of the homomorphism  $\Phi_t$, we obtain the main results of this paper:
  \begin{enumerate}
      \item Theorem~\ref{thm:Bnt-irreps}
        gives  the decomposition into irreducibles of the top homology  $ \tilde{H}_{n-2}(B_n^{(t)})$ of  $B_n^{(t)}$ under the action of $\sym_n^{\times t}$.
      \item Theorem~\ref{thm:Sn-diag-action-homology-Bn-t}
       gives a formula for the irreducible decomposition of the diagonal $\sym_n$-action on $ \tilde{H}_{n-2}(B_n^{(t)})$ in terms of Kronecker products, including an explicit formula for the character values.
      \item  Theorem~\ref{thm:rank-selected-homology-Bnt} gives a recursive formula for the product Frobenius characteristic of the rank-selected homology, from which one can obtain  explicit formulas for the irreducible decomposition. 
      \item Theorem~\ref{thm:ps-rank-selection}  shows that the stable principal specialisation of the product Frobenius characteristic of the rank-selected homology of $B^{(t)}_n$ gives, up to a factor,  the corresponding rank-selected invariant for $B_{n,q}^{(t)}$.   
  \end{enumerate}
  All homology  in this work is reduced, and  taken with rational coefficients.

\vskip.1in
\noindent 
{\bf Acknowledgment} This material is based upon work supported by the National Science Foundation under Grant No. DMS-1928930, while the authors were in residence at the Simons Laufer Mathematical Sciences Research Institute in Berkeley, California, during the  Summer of 2023.  The authors thank John Shareshian, whose  suggestion to extend the results of \cite{YLiqCSV2023} to $t$-fold Segre powers led to the discoveries in this paper.  They are  grateful for the comments of the anonymous referee.  Thanks also to  the referees  of FPSAC 2025 (Sapporo), where an Extended Abstract  \cite{LiSu2025FPSAC}  of this paper  has been accepted.

\section{Segre powers and rank-selected invariants}\label{sec:Segre-EL}

We refer the reader to \cite{BjTopMeth1995, ec1, WachsPosetTop2007} for background on posets and topology.

Recall \cite{ec1} that the product poset $P\times Q$ of two posets $P,Q$ has order relation defined by $(p,q)\le (p',q') $ if and only if $p\le_P p'$ and $q\le_Q q'$. Segre products are defined in greater generality by Bj\"orner and Welker  in \cite{Segre_rees}. This paper is concerned with the following special case. 
\begin{dfn}[\cite{Segre_rees}]\label{def:Segre-prod} 
 Let $P$ be a bounded graded poset. The $t$-fold Segre power ${P\circ \cdots \circ P}$ ($t$ factors), denoted $P^{(t)}$,  is  defined  for all $t\ge 2$ to be the induced subposet of the $t$-fold product poset $P\times \cdots \times P$ ($t$ factors) consisting of $t$-tuples $(x_1,\ldots, x_t)$ such that $\rk(x_i)=\rk(x_j), 1\le i,j\le t$.  The cover relation in $P^{(t)}$ is thus 
$(x_1,\ldots,x_t) \coveredby (y_1,\ldots, y_t)$ if and only if $x_i\coveredby y_i$ in $P$, for all $i=1,\ldots, t$.   When $t=1$ we set $P^{(1)}$ equal to $P$.
\end{dfn}
It follows that $P^{(t)}$ is also a ranked poset which inherits the rank function of $P$. 

 Figure~\ref{fig:Segre square} shows the Segre square $P\circ P$ of a poset $P$,  an induced subposet of  $P\times P$. 
Missing in  $P\circ P$ are   these elements in the product $P\times P$:
$(a,c), (a,d), (b,c), (b,d), (c,a), (c,b),$ $ (d,a), (d,b)$, 
as well as all $(\hat 0, y), (y, \hat 0), y\ne \
\hat 0$, and $(x, \hat 1), (\hat 1, x)$, $x\ne \hat 1$. 

\begin{figure}\
    \begin{subfigure}{0.3\textwidth}
        \centering
        \begin{tikzpicture}[scale=0.8]
        \node (max) at (0,0) {$\hat{1}$};
        \node (c) at (-1,-1) {$c$};
        \node (d) at (1,-1) {$d$};
        \node (a) at (-1,-2) {$a$};
        \node (b) at (1,-2) {$b$};
        \node (min) at (0,-3) {$\hat{0}$};
        \draw (min)--(a)--(c)--(max)--(d)--(a);
        \draw (min)--(b)--(d);
    \end{tikzpicture}
    \caption{Hasse diagram of $P$. }
    \label{fig:P-repeat-labels}
    \end{subfigure}
    \begin{subfigure}{0.5\textwidth}
        \centering
        \begin{tikzpicture}[scale=0.8]
        \node (max) at (0,0) {$\hat{1}, \hat 1$};
        \node (cc) at (-3,-1) {$c,c$};
        \node (cd) at (-1,-1) {$c,d$};
        \node (dc) at (1,-1) {$d,c$};
        \node (dd) at (3,-1) {$d,d$};
        \node (aa) at (-3,-3) {$a,a$};
        \node (ab) at (-1,-3) {$a,b$};
        \node (ba) at (1,-3) {$b,a$};
        \node (bb) at (3,-3) {$b,b$};
        \node (min) at (0,-4) {$\hat{0}, \hat 0$};
        \draw (min)--(aa)--(cc)--(max)--(cd)--(aa)--(dc)--(max);
        \draw (aa)--(dd);
        \draw (ab)--(cd);
        \draw (ba)--(dc);
        \draw  (min)--(ab)--(dd)--(max);
        \draw  (min)--(ba)--(dd);
        \draw (min)--(bb)--(dd);
    \end{tikzpicture}
    \caption{Hasse diagram of $P\circ P$}
    \label{fig:PP-repeat-labels}
    \end{subfigure}
\caption{$P\circ P$ is an induced subposet of the product poset $P\times P$.}
\label{fig:Segre square}
\end{figure}

Let $P$ be a finite graded bounded poset of rank $n$, and let $J\subset [n-1]=\{1,\ldots,n-1\}$ be any subset of nontrivial ranks.  Let $P(J)$ denote  the rank-selected bounded subposet of $P$ consisting of elements in the rank-set $J$, together with $\hat 0$ and $\hat 1$.
Stanley  \cite[Section 3.13]{ec1} defined two rank-selected invariants $\tilde\alpha_P(J)$ and $\tilde\beta_P(J)$ 
as follows.  
\begin{itemize}
    \item
$\tilde\alpha_P(J)$ is the number of maximal chains in the rank-selected subposet $P(J)$, and 
\item
$\tilde\beta_P(J)$ is the integer defined by the equation 
\[\tilde\beta_P(J):=\sum_{U\subseteq J} (-1)^{|J|-|U|}\tilde\alpha_P(U).\]  
Equivalently, 
\[\tilde\alpha_P(J)=\sum_{U\subseteq J} \tilde\beta_P(U).\]
\end{itemize}
One also has the formula for the M\"obius number of the rank-selected subposet $P(J)$ \cite[Eqn. (3.54)]{ec1}:
\begin{equation}\label{eqn:rank-sel-inv-to-mu}\tilde\beta_P(J)=(-1)^{|J|-1} \mu_{P(J)}(\hat 0, \hat 1).\end{equation}

When the poset $P$ has the recursive structure  described in the lemma below, the rank-selected invariants satisfy a pleasing recurrence that we record for later use in  Section~\ref{sec:stable-ps}.

\begin{lem}\label{lem:rec-rank-select-Betti-GENERAL} Let $P$ be a  graded, bounded poset  of rank $n$, with the property that for any $x\in P$, the poset structure of the interval $(\hat 0,  x)$ depends only on the  rank of $x$.   Thus we may write $P_i=(\hat 0, x_0)$ for any $x_0$ of rank $i$. Let $wh_i(P)$ denote the number of elements of $P$ at rank $i$.  Then we have the following recurrence for the rank-selected invariants $\tilde\beta_P(J)$ of $P$, $J=\{1\le j_1<\cdots<j_r\le n-1\}\subseteq [n-1]$. 
\begin{equation}\label{eqn:actual-rec-mu-rank-select-GENERAL} 
\tilde\beta_P(J) +\tilde\beta_P(J\setminus\{j_r\}) =wh_{j_r}(P) \cdot \tilde\beta_{P_{j_r}}(J\setminus\{j_r\}  )
\end{equation}

For the full poset $P$, we have 
\begin{equation}\label{eqn:full-mu-rec-GENERAL} \mu_P(\hat 0,\hat 1)=-\sum_{i=0}^{n-1} wh_i(P)\cdot \mu_{P_i}(\hat 0, \hat 1).\end{equation}
\end{lem}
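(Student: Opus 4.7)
The plan is to prove both recurrences by using the defining relationship between $\tilde\alpha_P$ and $\tilde\beta_P$ together with a chain-counting identity that follows from the rank-dependence hypothesis on lower intervals. Throughout, $x_0$ denotes an arbitrary element of $P$ at rank $i$, and by hypothesis $[\hat 0, x_0]$ has the same poset structure as $[\hat 0, x_0']$ for any other $x_0'$ of rank $i$.

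The first key observation I would record is a multiplicative formula for the chain-count $\tilde\alpha_P(U)$ when the largest element of $U$ is $j_r$. A maximal chain in $P_U$ is of the form $\hat 0 \coveredby x_{j_1} \coveredby \cdots \coveredby x_{j_r} \coveredby \hat 1$, and since $j_r$ is the top nontrivial rank, the portion of the chain from $\hat 0$ up to $x_{j_r}$ is an arbitrary maximal chain through ranks $U\setminus\{j_r\}$ in the closed interval $[\hat 0, x_{j_r}]$. Combining this with the hypothesis that $[\hat 0, x_{j_r}]$ depends only on the rank $j_r$, I obtain
\begin{equation*}
\tilde\alpha_P(U) \;=\; wh_{j_r}(P)\cdot \tilde\alpha_{P_{j_r}}(U\setminus\{j_r\}) \qquad \text{whenever } j_r\in U.
\end{equation*}

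Next I would plug this into the defining formula
$\tilde\beta_P(J) = \sum_{U\subseteq J}(-1)^{|J|-|U|}\tilde\alpha_P(U)$,
splitting the sum according to whether $j_r\in U$ or not. The subsets containing $j_r$ are in bijection with $V\subseteq J\setminus\{j_r\}$ via $U=V\cup\{j_r\}$, and the subsets not containing $j_r$ are exactly the subsets of $J\setminus\{j_r\}$. Since $|J|-|V\cup\{j_r\}|=|J\setminus\{j_r\}|-|V|$ and $(-1)^{|J|-|U|}=-(-1)^{|J\setminus\{j_r\}|-|U|}$, these two groups collapse into
\begin{equation*}
\tilde\beta_P(J) \;=\; wh_{j_r}(P)\cdot \tilde\beta_{P_{j_r}}(J\setminus\{j_r\}) \;-\; \tilde\beta_P(J\setminus\{j_r\}),
\end{equation*}
which is the claimed recurrence \eqref{eqn:actual-rec-mu-rank-select-GENERAL} after rearrangement.

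For \eqref{eqn:full-mu-rec-GENERAL} I would invoke the standard identity $\sum_{x\in[\hat 0,\hat 1]}\mu_P(\hat 0,x)=0$, isolate the term $x=\hat 1$, and group the remaining terms by rank. The rank-dependence hypothesis lets me replace $\mu_P(\hat 0,x)$ for $x$ of rank $i$ by $\mu_{P_i}(\hat 0,\hat 1)$ (interpreted as the M\"obius value of the closed interval $[\hat 0, x_0]$ from bottom to top), and multiplying by the number $wh_i(P)$ of such $x$ yields the stated formula. I do not anticipate a real obstacle here; the main care needed is bookkeeping the sign $(-1)^{|J|-|U|}$ against $(-1)^{|J\setminus\{j_r\}|-|U|}$ in the subset-splitting step, and interpreting $P_i$ consistently (open versus closed interval) in the M\"obius recurrence.
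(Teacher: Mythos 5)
Your argument is correct, and for the main recurrence \eqref{eqn:actual-rec-mu-rank-select-GENERAL} it takes a genuinely different route from the paper's. The paper stays on the M\"obius side throughout: it first observes that the rank-dependence hypothesis is inherited by every rank-selected subposet, then applies the M\"obius function recurrence to $Q=P_J$, splitting the sum at the top nontrivial rank $j_r$ to obtain
\[
\mu_{P_J}(\hat 0,\hat 1)=-\,wh_{j_r}(P)\cdot\mu_{(P_{j_r})_{J\setminus\{j_r\}}}(\hat 0,\hat 1)+\mu_{P_{J\setminus\{j_r\}}}(\hat 0,\hat 1),
\]
and finally converts to $\tilde\beta$ via the identity $\tilde\beta_P(J)=(-1)^{|J|-1}\mu_{P_J}(\hat 0,\hat 1)$ of \eqref{eqn:rank-sel-inv-to-mu}. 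You work instead on the chain-counting side: your factorization $\tilde\alpha_P(U)=wh_{j_r}(P)\,\tilde\alpha_{P_{j_r}}(U\setminus\{j_r\})$ is valid (note that $U\subseteq J$ and $j_r\in U$ force $j_r=\max U$, so above an element $x$ of rank $j_r$ a maximal chain of $P_U$ contains only $\hat 1$, while below $x$ it is an arbitrary maximal chain through ranks $U\setminus\{j_r\}$ of the interval $[\hat 0,x]$, and gradedness of $P$ ensures every such $x$ contributes the same count), and your subset-splitting of the defining inclusion--exclusion, with the signs you recorded, collapses correctly to $\tilde\beta_P(J)=wh_{j_r}(P)\,\tilde\beta_{P_{j_r}}(J\setminus\{j_r\})-\tilde\beta_P(J\setminus\{j_r\})$. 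Your route buys two things: it never invokes the translation \eqref{eqn:rank-sel-inv-to-mu} between $\tilde\beta$ and M\"obius numbers, and it uses the hypothesis on lower intervals only within $P$ itself, rather than needing its inheritance by rank-selected subposets; moreover, your $\tilde\alpha$ factorization is the non-equivariant shadow of the stabiliser computation the paper later performs for $\alpha_n^{(t)}(J)$ in Section~\ref{sec:rank-selection}. The paper's route buys economy: the single M\"obius recurrence, refined by rank, also delivers \eqref{eqn:full-mu-rec-GENERAL} at once --- and for that second formula your argument coincides with the paper's, including the correct reading of $\mu_{P_i}(\hat 0,\hat 1)$ as $\mu_P(\hat 0,x_0)$ for $x_0$ of rank $i$.
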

\begin{proof} Observe first that the condition satisfied by $P$ is inherited by every rank-selected subposet $Q$ of $P$.  For such  posets $Q$ of rank $k$, the M\"obius function recurrence gives 
\begin{align}\label{eqn:mu-uniform-poset}
\mu_Q(\hat 0, \hat 1)&=-\sum_{x: \rk(x)=k-1}\mu_Q(\hat 0, x) -\sum_{x: \rk(x)\le k-2}\mu_Q(\hat 0, x)\\ \notag
&=-|\{x:\rk(x)=k-1\}|\cdot \mu_Q(\hat 0, x_0) +\mu_{Q_{\{\le k-2\}}}(\hat 0, \hat 1),\\
&\text{where $x_0$ is any fixed element of rank $k-1$}. \notag
\end{align}
Here $Q_{\{\le k-2\}}$ is the subposet of $Q$ consisting of the bottom $k-2$ ranks.

Applying this to $Q=P(J)$ now gives the result.

By refining the M\"obius function recurrence according to the rank, we obtain the special case for the full poset $Q=P$, since 
\[\mu_P(\hat 0,\hat 1)=-\sum_{i=0}^{n-1} \sum_{\text{ $x$ at rank $i$}} \mu_P(\hat 0, x). \qedhere\]
\end{proof}

The condition of the lemma is satisfied by the Boolean lattice $B_n$ and also by the subspace lattice $B_{n,q}$.
We can now derive a recurrence for the rank-selected invariants $\tilde\beta_{B_{n,q}^{(t)}}(J)$, $J\subseteq [n-1]$,  
of the $t$-fold Segre power $B_{n,q}^{(t)}$. From \eqref{eqn:rank-sel-inv-to-mu}, these are also the unsigned M\"obius numbers of the corresponding rank-selected subposets.  

Recall that the number of $i$-dimensional subspaces of the $n$-dimensional vector space  $\mathbb{F}^n_q$ \cite[Proposition 1.7.2]{ec1} is given by the $q$-binomial coefficient

\begin{equation}\label{eqn:q-bin-REMOVE}{n\brack i}_q:= \frac{(1-q^n) (1-q^{n-1})\cdots (1-q)}{(1-q^i) (1-q^{i-1})\cdots (1-q)\,(1-q^{n-i}) (1-q^{n-i-1})\cdots (1-q)  }. \end{equation} 
Let $(-1)^{n-2}W^{(t)}_n(q)$ be the M\"obius number of  the $t$-fold Segre power of the subspace lattice. When $q=1$, $B_{n,q}$ specialises to $B_n$, and hence from Proposition~\ref{prop:mu-Bnt} we have $W^{(t)}_n(1)=w_n^{(t)}$.  

To avoid a profusion of parentheses,  for the $k$th power of the $q$-binomial coefficient we write ${n\brack i}_q^k$. 

\begin{prop}\label{prop:rec-rank-select-Betti-Bnq} We have the following recurrence for the rank-selected invariants of $B_{n,q}^{(t)}$.
For the rank-set  $J=\{1\le j_1<\cdots<j_r\le n-1\}$:
\begin{equation}\label{eqn:actual-rec-mu-rank-select-Bnq} 
\tilde\beta_{B_{n,q}^{(t)}}(J) +\tilde\beta_{B_{n,q}^{(t)}}(J\setminus\{j_r\}) ={n \brack j_r}_q^t \tilde\beta_{B_{j_r}^{(t)}(q)}(J\setminus\{j_r\}  )
\end{equation}

For the full poset $B_{n,q}^{(t)}$, we have the recurrence 
\begin{equation}\label{eqn:rec-mu-tfold-subspacelattice} 
W^{(t)}_n(q)=\sum_{i=0}^{n-1} (-1)^{n-1-i} {n\brack i}_q^t W^{(t)}_i(q).
\end{equation}
\end{prop}
\begin{proof}  We apply Lemma~\ref{lem:rec-rank-select-Betti-GENERAL} to the rank-selected subposet $B_{n,q}^{(t)}(J)$, using the fact that $wh_i$, the number of elements at rank $i$ in the $t$-fold Segre power, is precisely ${n\brack i}^t_q$. Also,  if $x_0$ is at rank $j_r$,  the interval $(0,x_0)$ in $B_{n,q}^{(t)}(J)$ is poset isomorphic to the rank-selected subposet of $B_{j_r}^{(t)}(q)$ corresponding to the rank-set $J\setminus\{j_r\}$. Using \eqref{eqn:rank-sel-inv-to-mu} for the passage from M\"obius numbers to rank-selected invariants, the first recurrence in Lemma~\ref{lem:rec-rank-select-Betti-GENERAL} now gives \eqref{eqn:actual-rec-mu-rank-select-Bnq}. 

Similarly, the second recurrence in Lemma~\ref{lem:rec-rank-select-Betti-GENERAL} gives \eqref{eqn:rec-mu-tfold-subspacelattice}. \end{proof}

The recurrence~\eqref{eqn:actual-rec-mu-rank-select-Bnq}, in conjunction with an equivariant version of the recurrence in Lemma~\ref{lem:rec-rank-select-Betti-GENERAL} for the Boolean lattice that we derive in Section~\ref{sec:rank-selection}, will be used in Section~\ref{sec:stable-ps} when we consider the stable principal specialisation.

 We conclude this section by explaining more precisely the relevance of the numbers $w_n^{(t)}$ mentioned  in the Introduction.  We need  
 the following expression due to Stanley for $\tilde\beta_{B_{n,q}^{(t)}}(J)$ as a polynomial in $q$ with nonnegative coefficients.  Let $\Asc(\sigma)$ denote the ascent set of $\sigma$, that is, the set $\{i: 1\le i\le n-1, \sigma(i)>\sigma(i+1)\}$ of ascents of $\sigma$. 

\begin{thm}[{\cite[Theorem~3.1]{RPS-BinomialPosetsJCTA1976}}] \label{thm:rank-sel-beta-inv-N-of-q} Let $J\subseteq [n-1]$.  Write $J^c=[n-1]\setminus J$. Then 
for the rank-selected $t$-fold Segre power of the subspace lattice $B_{n,q}^{(t)}$, one has 
\[\tilde\beta_{B_{n,q}^{(t)}}(J)
=\sum_{\substack
{(\sigma^1,\ldots,\sigma^t)\in\mathfrak{S}_n^{\times t}\\
{J^c=\cap_{i=1}^t \Asc(\sigma^i)}
}} \prod_{i=1}^t q^{\inv(\sigma^i)}\]

In particular, 
%
the M\"obius number of the $t$-fold Segre power of the subspace lattice is $(-1)^{n-2}W^{(t)}_n(q)$, where 
\[ W^{(t)}_n(q)
:=\sum_{(\sigma_1,\ldots \sigma_t)\in \sym_n^{\times t}}\,
\prod_{i=1}^t q^{\inv(\sigma_i)},\] 
and the sum is over all $t$-tuples of permutations in $\sym_n$ with no common ascent.
\end{thm}

Setting $q=1$ gives the  special case of the Segre powers of the Boolean lattice, as mentioned  in the Introduction.
The generating function below  appears in \cite{RPS-BinomialPosetsJCTA1976}.  The numbers $w_n^{(t)}$ also appear  in \cite{AbramsonPromislowJCTA1978}.

\begin{prop}[{See \cite[Eqn. (28) and Theorem 3.1]{RPS-BinomialPosetsJCTA1976}}] \label{prop:mu-Bnt} The M\"obius number of $B^{(t)}_n$ is given by $(-1)^{n} w^{(t)}_n$, where for $n\ge 1$, 
$w^{(t)}_n$ is the number of $t$-tuples of permutations in $\sym_n$ with no common ascent.  Hence, setting $w_0^{(t)}=1$, the numbers $w^{(t)}_n$ satisfy the recurrence 
\begin{equation}\label{eqn:dim-Segre-homology}
 \sum_{i=0}^{n} (-1)^{i}   w^{(t)}_i\binom{n}{i}^t=0.
\end{equation}
Furthermore, we have the generating function  
\[\sum_{n\ge 0} w^{(t)}_n \frac{z^n}{n!^t}=\frac{1}{f(z)},
\quad \text{ where } f(z)=\sum_{n\ge 0} (-1)^n \frac{z^n}{n!^t}.\]
More generally, for the rank selection $J\subseteq [n-1]$, the M\"obius number  $\mu(B^{(t)}_n(J) )$ of $B^{(t)}_n(J)$ is given by $(-1)^{|J|-1} w^{(t)}_n(J)$, where 
$w^{(t)}_n(J)$ is the number of $t$-tuples of permutations in $\sym_n$ such that their set of common ascents coincides with the complement of $J$ in $[n-1]$.
\end{prop}
\begin{proof} The first recurrence is a restatement of the M\"obius function recurrence for the lattice $B^{(t)}_n$, and the generating function then follows. 
The statement for the rank-selected M\"obius number is the case $q=1$ of \cite[Theorem 3.1]{RPS-BinomialPosetsJCTA1976}.
\end{proof}
\section{The product Frobenius characteristic}\label{sec:prod_Frob}

We refer to \cite{Macd1995} for all background on symmetric functions and representations of the symmetric group $\mathfrak{S}_n$. See also \cite[Chapter 7]{ec2}.  In particular, $h_n$, $e_n$ and $p_n$  are respectively  the homogeneous, elementary, and power sum symmetric functions of degree $n$, giving rise to basis elements $h_\lambda, e_\lambda$ and $p_\lambda$ indexed by partitions $\lambda$ of $n$, in the algebra of symmetric functions of homogeneous degree $n$, and $s_\lambda$ is the Schur function indexed by  $\lambda$.

The action of the symmetric group $\sym_n$ on the Boolean lattice $B_n$ extends naturally to an action of the $t$-fold direct product $\sym_n^{\times t}:=\sym_n\times\cdots\times \sym_n$ ($t$ factors),  on the $t$-fold Segre power of $B_n$.  For the Segre square $B_n\circ B_n$, a \emph{product Frobenius map}, generalizing the well-known ordinary Frobenius characteristic in \cite{Macd1995}, was defined \cite{YLiqCSV2023} in order to study the action of $\sym_n\times \sym_n$ on the homology.

The goal of this section is to rigorously define and extend this construction to arbitrary $t$. We  begin by giving an alternative description of the product Frobenius map in \cite{YLiqCSV2023}. 
As in \cite[Chapter 1, Section  7]{Macd1995}, let $R^n$ denote the vector space spanned by the irreducible characters  of the symmetric group $\sym_n$ over $\bQ$, or equivalently the vector space spanned by the class functions of $\sym_n$.
Let $R=\oplus_{n\ge 0} R^n$. Then $R$ is equipped with the structure of a graded commutative and associative ring with identity element 1 for the group $\sym_0=\{1\}$, arising from the bilinear map $R^m\times R^n\rightarrow R^{m+n}$, defined by $(f,g)\mapsto (f\times g)\uparrow_{\sym_m\times\sym_n}^{\sym_{m+n}}$, the induced character from  $f$ and $g$.

Let $\Lambda^m(X)$ be the ring of symmetric functions in the set of variables $X$, of homogeneous degree $m$, and let $\Lambda(X)=\oplus_{m\ge 0} \Lambda^m(X)$. 
Write $\mu\vdash n$ for an integer partition $\mu=(\mu_1\ge\cdots\ge \mu_\ell)$ of the integer $n\ge 1$, so that $\sum_{i=1}^\ell \mu_i=n$, $\mu_i\ge 1$ for all $i$, and $\ell(\mu)$ for the number of parts $\mu_i$ of $\mu$. (There is only one integer partition of $0$, the empty partition  with zero parts.)

The ordinary Frobenius characteristic map $\ch$ is defined as follows. For each $f\in R^n$, 
\begin{equation}\label{eqn:ordinary-charmap-p}
\ch(f):=\sum_{\mu\vdash n} z_\mu^{-1} f_\mu p_\mu(X),
\end{equation}
where $f_\mu$ denotes the value of the class function $f$ on the class indexed by the partition $\mu\vdash n$,  $z_\mu$ is the order of the centraliser of a permutation of type $\mu$ in the symmetric group $\sym_n$, and $p_\mu(X)$ is the power sum symmetric function indexed by $\mu$.
In particular when $f$ is the irreducible character $\chi^\lambda$ indexed by the partition $\lambda$, then 
\begin{equation}\label{eqn:ordinary-charmap-s}
\ch(\chi^\lambda)=s_\lambda(X),
\end{equation}
where $s_\lambda(X)$ is the Schur function indexed by $\lambda$. The set of Schur functions $\{s_\lambda(X): \lambda\vdash n\}$ forms a basis for $\Lambda^n(X)$.  
Furthermore, $\ch$ is a ring isomorphism from $R$ to $\Lambda(X)$, since for $f\in R^m, g\in R^n$, 
\begin{equation}\label{eqn:ord-induction}
\ch\left((f\times g)\uparrow_{\sym_m\times\sym_n}^{\sym_{m+n}}\right) =\ch(f) \ch(g).
\end{equation}
In particular, for $\lambda\vdash m, \mu\vdash n$, 
\[\ch\left((\chi^\lambda\times \chi^\mu)\uparrow_{\sym_m\times\sym_n}^{\sym_{m+n}}\right)=s_\lambda(X) s_\mu(X). \]

  We wish to generalize this to a $t$-fold direct product of symmetric groups. The main idea originates in \cite{YLiqCSV2023}, where  the case $t=2$ was treated.  We will give a slightly different treatment here, elaborating on details that were omitted in \cite{YLiqCSV2023}. 
  Let $\boldvec{n}=(n_1,\ldots,n_t)\in\bZ_{\ge 0}^t$ be a $t$-tuple of nonnegative integers, and let  $\sym_{\boldvec{n}}$ be the direct product of symmetric groups $\bigtimes_{i=1}^t\sym_{n_i}$. The irreducible characters of $\sym_{\boldvec{n}}$ are indexed by $t$-tuples of partitions ${\boldvec{\lambda}}=(\lambda^1, \ldots, \lambda^t)$ where $\lambda^i\vdash n_i$. Let $R^{\boldvec{n}}$ denote the vector space spanned by the irreducible characters, or equivalently the vector space spanned by the class functions,  of the direct product of symmetric groups  $\sym_{\boldvec{n}}$ over $\bQ$.  Then $R^{\boldvec{n}}=\otimes_i R^{n_i}$.  
Let $\underline{R}=\oplus_{\boldvec{n}\in \bZ_{\ge 0}^t} R^{\boldvec{n}}$.

Let $(X^i)$, $i=1,\ldots, t$ be $t$ sets of variables. For each $i$ we consider the ring of symmetric functions $\Lambda^{n_i}(X^i)$ in the variables $(X^i)$, of homogeneous degree $n_i$. 
As in \cite[Chapter 1, Section 5, Ex. 25]{Macd1995}, we identify the tensor product $\bigotimes_{i=1}^t\Lambda^{n_i}(X^i)$ with products of functions of $t$ sets of variables $(X^i)_{i=1}^t$, symmetric in each set separately, i.e., with the vector space spanned by the set of elements 
\[\left\lbrace\prod_{i=1}^t f_{n_i}(X^i) : f_{n_i}(X^i)\in \Lambda^{n_i}(X^i)\right\rbrace.\]
Thus $\bigotimes_{i=1}^t f_{n_i}(X^i) \mapsto \prod_{i=1}^t f_{n_i}(X^i)$.

\begin{dfn}[{cf. \cite[Definition 3.2]{YLiqCSV2023}}]\label{def:Ych} Define the map $\Pch:R^{\boldvec{n}} \rightarrow \bigotimes_{i=1}^t\Lambda^{n_i}(X^i)$ 
as follows.  Let $f_{n_i}\in R^{n_i}$ and define 
\[\Pch\left(\bigotimes_{i=1}^t f_{n_i}\right):= \prod_{i=1}^t\ch(f_{n_i}) (X^i),\]
where $\ch$ denotes the ordinary Frobenius characteristic map on $R$ as in~\eqref{eqn:ordinary-charmap-s}. This can be extended multilinearly to all of $R^{\boldvec{n}}$.
In particular for the irreducible character $\chi^{\boldvec{\lambda}}=\bigotimes_{i=1}^t \chi^{\lambda^i}$ indexed by the $t$-tuple ${\boldvec{\lambda}}=(\lambda^1, \ldots, \lambda^t)$, we have 
\[\Pch( \chi^{\boldvec{\lambda}})=\prod_{i=1}^t s_{\lambda^i}(X^i),\]
a product of Schur functions in $t$ different sets of variables.

Expanding in terms of power sum symmetric functions, we obtain, in analogy with \eqref{eqn:ordinary-charmap-p}, for an arbitrary character $\chi$ of $\sym_{\boldvec{n}}$, the formula 
\[\Pch(\chi)=\sum_{\boldvec{\mu}} \chi(\boldvec{\mu}) \prod_{i=1}^t z_{\mu^i}^{-1} \prod_{i=1}^t p_{\mu^i}(X^i),\]
where we have written $\chi(\boldvec{\mu}) $ for the value of the character $\chi$ on the conjugacy class of $\sym_{\boldvec{n}}$ indexed by the $t$-tuple $\boldvec{\mu}=(\mu^1, \ldots, \mu^t)$, $\mu^i\vdash n_i$, and $z_\mu$ is the order of the centraliser in $\sym_n$ of an element of cycle-type $\mu\vdash n$.
\end{dfn}

When $t=1$, $\Pch(\chi)=\ch(\chi)$ for all characters $\chi$ of $\sym_n$, and the product Frobenius characteristic coincides with the ordinary characteristic map.

There is an inner product on $\bigotimes_{i=1}^t\Lambda^{n_i}(X^i)$ defined by 
\begin{equation}\label{eqn:inner-product}
\big\langle \prod_{i=1}^t f_i, \prod_{i=1}^t g_i \big\rangle:=\prod_{i=1}^t  \langle  f_i,  g_i \rangle_{\Lambda^{n_i}(X^i)},
\end{equation}
where $\langle f_i ,g_i\rangle_{\Lambda^{n_i}(X^i)}$ is the usual inner product \cite[Chapter I, Section 7]{Macd1995} in 
the ring of homogeneous symmetric functions $\Lambda^{n_i}(X^i)$ in a single set of variables $X^i$, corresponding to the  inner product of class functions of the symmetric group.

\begin{ex} Let $t=2$ and consider the regular representation $\psi$ of $\sym_2\times \sym_3$.  Then $\psi$  decomposes into irreducibles as follows:
\[\chi^{((2), (3))} +\chi^{((1^2), (3))} +2\chi^{((2), (2,1))} 
+2\chi^{((1^2), (2,1))} + \chi^{((2), (1^3))} +\chi^{((1^2), (1^3))}.\]
Using $X^1$ and $X^2$ for the two sets of variables, we have  
\begin{equation*}
\begin{split}
\Pch(\psi)
&=s_{(2)}(X^1)s_{(3)}(X^2) +s_{(1^2)}(X^1) s_{(3)}(X^2) +2 
s_{(2)}(X^1) s_{(2,1)}(X^2) +2 s_{(1^2)}(X^1) s_{(2,1)}(X^2)\\
&+ s_{(2)}(X^1) s_{(1^3)}(X^2) +s_{(1^2)}(X^1) s_{(1^3)}(X^2)\\
&=h_1^2(X^1) h_1^3(X^2)
\end{split}
\end{equation*}
\end{ex}

We want $\Pch$ to be a ring homomorphism with respect to an induction product akin to~\eqref{eqn:ord-induction}.  In \cite[Definition~3.6]{YLiqCSV2023}, this induction product 
was defined to take an ordered  pair $(\psi, \phi)$ where $\psi$ is a character of $\sym_k\times \sym_\ell$ and $\phi$ is a character of $\sym_m\times \sym_n$, and produce a character of $\sym_{k+m}\times \sym_{\ell+n}$.  For the $t$-fold products, we wish to take a character $\psi$ of  $\sym_{\boldvec{m}}=\bigtimes_{i=1}^t \sym_{m_i}$ and a character $\phi$ of 
$\sym_{\boldvec{n}}=\bigtimes_{i=1}^t \sym_{n_i}$, and map the pair $(\psi, \phi)$
to a character of $\sym_{\boldvec{m}+\boldvec{n}}=\bigtimes_{i=1}^t \sym_{m_i+n_i}$. Here $\boldvec{m}+\boldvec{n}=(m_1+n_1,\ldots, m_t+n_t)$. To do this rigorously, we  first take $\psi$ and $\phi$  to be irreducible characters; the definition will then extend  multilinearly in the obvious way.

\begin{dfn}\label{def:t-fold-IndProduct} Let 
${\boldvec{\lambda}}=(\lambda^1, \ldots , \lambda^t), \lambda^i\vdash m_i$ 
and ${\boldvec{\mu}}=(\mu^1, \ldots , \mu^t), \mu^i\vdash n_i$, so that 
$\chi^{\boldvec{\lambda}}=\bigotimes_{i=1}^t \chi^{\lambda^i}$ and 
$ \chi^{\boldvec{\mu}}=\bigotimes_{i=1}^t \chi^{\mu^i}    $ are respectively irreducible characters of $\sym_{\boldvec{m}}$ and $\sym_{\boldvec{n}}$. 
The $t$-fold \emph{induction product} $\chi^{\boldvec{\lambda}}\circ \chi^{\boldvec{\mu}}$ is then defined to be the induced character 
\begin{equation}\label{eqn:t-fold-ind}
\chi^{\boldvec{\lambda}}\circ \chi^{\boldvec{\mu}}
:=\bigotimes_{i=1}^t (\chi^{\lambda^i}\otimes \chi^{\mu^i})\uparrow_{\sym_{m_i}\times\sym_{n_i}}^{\sym_{m_i+n_i}},
\end{equation}
a character of the direct product $\sym_{\boldvec{m}+\boldvec{n}}$.
Note that this defines a bilinear multiplication 
\[R^{\boldvec{m}}\times R^{\boldvec{n}} \rightarrow R^{\boldvec{m}+\boldvec{n}},\]
for all pairs of $t$-tuples $\boldvec{m}, \boldvec{n}
    \in \bZ_{\ge 0}^t,$ endowing  $\underline{R}=\oplus_{\boldvec{n}\in \bZ_{\ge 0}^t} R^{\boldvec{n}}$
    with the structure of a commutative and associative graded ring with unity, in exact analogy with \cite[p. 112]{Macd1995}.   The grading is now by $t$-tuples $\boldvec{n}$.
    
    We now extend this definition  multilinearly to any pair of representations $\psi$ of $\sym_{\boldvec{m}}=\bigtimes_{i}\sym_{m_i}$ and $\phi$ of $\sym_{\boldvec{n}}=\bigtimes_i \sym_{n_i}$, to produce a new representation $\psi \circ \phi$ of $\sym_{\boldvec{m}+\boldvec{n}}$.
    Explicitly, if $\psi=\sum_{\boldvec{\lambda}}a_{\boldvec{\lambda}}(\psi) \chi^{\boldvec{\lambda}}$ and 
    $\phi=\sum_{\boldvec{\mu}}a_{\boldvec{\mu}}(\phi) \chi^{\boldvec{\mu}}$, then we define the induction product of $\psi$ and $\phi$ to be the following representation of $\sym_{\boldvec{m}+\boldvec{n}}$:
    \begin{equation}\label{eqn:gen-ind-product}\psi \circ \phi:=\sum_{\boldvec{\lambda}, \boldvec{\mu}} a_{\boldvec{\lambda}}(\psi)\,a_{\boldvec{\mu}}(\phi)\  
    (\chi^{\boldvec{\lambda}}\circ \chi^{\boldvec{\mu}}).\end{equation}
\end{dfn}

We note that the dimension of $\chi^{\boldvec{\lambda}}\circ \chi^{\boldvec{\mu}}$ is $\prod_{i=1}^t f^{\lambda^i} f^{\mu^i} \binom{m_i+n_i}{m_i}$, where $f^\lambda$ is the dimension of the irreducible representation of the symmetric group $\sym_{|\lambda|}$ indexed by $\lambda$, i.e. the number of standard Young tableaux of shape $\lambda$.  Hence for the dimension of $\psi\circ \phi$ we have 
\begin{equation}\label{eqn:dim-in-prod}
\dim( \psi\circ \phi)=\dim(\psi)\dim(\phi) \prod_i \binom{m_i+n_i}{m_i}.
\end{equation}
\begin{rem} One could also form the ordinary induced representations $\psi\uparrow_{\sym_{\boldvec{m}}}^{\sym_{m_1+\ldots+m_t}}$  of $\sym_{m_1+\ldots+m_t}$ and  $\phi\uparrow_{\sym_{\boldvec{n}}}^{\sym_{n_1+\ldots+n_t}}$ of $\sym_{n_1+\ldots+n_t}$ 
as well as the induced representation 
$\psi\otimes \phi$ 
from
 $\sym_{\boldvec{m}}\times \sym_{\boldvec{n}}$
to $\sym_{\sum m_i+\sum n_j}$.  However, 
it is the above definition that proves useful in studying the Segre product of Boolean lattices.
\end{rem}

\begin{prop}\label{prop:HomomorphismPch} The map $\Pch$ is a bijective ring homomorphism, with respect to the induction product $\circ$ in $\underline{R}$, from $\underline{R}$ to $\bigotimes_{i=1}^t \Lambda(X^i)$.
Explicitly, if $\boldvec{m}, \boldvec{n} \in \bZ^t_{\ge 0}$ and $\psi$ and $\phi$ are characters of $\sym_{\boldvec{m}}$ and $\sym_{\boldvec{n}}$ respectively, then 
\[\Pch(\psi \circ \phi)=\Pch(\psi)\cdot \Pch(\phi).\]
    \end{prop}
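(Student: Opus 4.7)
The plan is to reduce the claim to the case of irreducible characters, then apply the classical Frobenius characteristic in each of the $t$ tensor factors separately. Since both $\Pch$ and the induction product $\circ$ are defined via multilinear extension (see Definition~\ref{def:Ych} and equation~\eqref{eqn:gen-ind-product}), it suffices to verify the homomorphism identity on irreducible characters $\chi^{\boldvec{\lambda}} \in R^{\boldvec{m}}$ and $\chi^{\boldvec{\mu}} \in R^{\boldvec{n}}$.

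First I would compute both sides directly. By Definition~\ref{def:Ych}, the right-hand side is
\[
\Pch(\chi^{\boldvec{\lambda}})\cdot \Pch(\chi^{\boldvec{\mu}})
= \prod_{i=1}^t s_{\lambda^i}(X^i)\cdot \prod_{i=1}^t s_{\mu^i}(X^i)
= \prod_{i=1}^t \bigl( s_{\lambda^i}(X^i)\, s_{\mu^i}(X^i)\bigr).
\]
For the left-hand side, the definition of the $t$-fold induction product~\eqref{eqn:t-fold-ind} writes $\chi^{\boldvec{\lambda}}\circ \chi^{\boldvec{\mu}}$ as a tensor product (over $i=1,\ldots,t$) of the induced characters $(\chi^{\lambda^i}\otimes \chi^{\mu^i})\uparrow_{\sym_{m_i}\times \sym_{n_i}}^{\sym_{m_i+n_i}}$. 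Applying $\Pch$ then yields $\prod_{i=1}^t \ch\bigl((\chi^{\lambda^i}\otimes \chi^{\mu^i})\uparrow\bigr)(X^i)$. Invoking the classical identity~\eqref{eqn:ord-induction} factor by factor, each such term equals $s_{\lambda^i}(X^i)\, s_{\mu^i}(X^i)$, and the two sides agree.

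For bijectivity, I would argue that $\Pch$ sends the basis $\{\chi^{\boldvec{\lambda}}\}_{\boldvec{\lambda}}$ of $\underline{R}$ (indexed by $t$-tuples of partitions) bijectively onto the basis $\{\prod_{i=1}^t s_{\lambda^i}(X^i)\}_{\boldvec{\lambda}}$ of $\bigotimes_{i=1}^t \Lambda(X^i)$ under the identification recalled from \cite[Ch.~I, \S5, Ex.~25]{Macd1995}. Since Schur functions in a single set of variables form a basis of $\Lambda(X^i)$, their products across $i=1,\ldots,t$ form a basis of the tensor product, so $\Pch$ is a linear bijection on each graded piece $R^{\boldvec{n}}$; combined with the ring homomorphism property above and the fact $\Pch(1)=1$, this yields the claim.

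No step is a genuine obstacle, since the argument is essentially bookkeeping: all the substance lies in the classical $t=1$ case, which is already encoded in equations~\eqref{eqn:ordinary-charmap-s} and~\eqref{eqn:ord-induction}. The only point requiring a little care is to make sure the identification of $\bigotimes_i \Lambda^{n_i}(X^i)$ with products of symmetric functions in disjoint alphabets is applied consistently on both sides; this is handled uniformly by working throughout in the tensor-product formulation and only at the end passing to the product of symmetric functions in disjoint variable sets.
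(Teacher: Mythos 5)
Your proposal is correct and follows essentially the same route as the paper's proof: reduce to irreducible characters, apply Definition~\ref{def:t-fold-IndProduct} and then $\Pch$ factor by factor, invoke the classical identity~\eqref{eqn:ord-induction} in each tensor slot, and extend by multilinearity, with bijectivity following because $\Pch$ sends the basis $\{\chi^{\boldvec{\lambda}}\}$ to the basis $\{\prod_{i=1}^t s_{\lambda^i}(X^i)\}$. No substantive differences from the paper's argument.
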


\begin{proof}
It is clear that $\Pch$ is an isomorphism of vector spaces, since by Definition~\ref{def:Ych} it maps basis elements to basis elements.
First we establish that 
\[\Pch(\chi^{\boldvec{\lambda}}\circ \chi^{\boldvec{\mu}} )=\Pch(\chi^{\boldvec{\lambda}}) \Pch( \chi^{\boldvec{\mu}} )\]
for irreducible characters $\chi^{\boldvec{\lambda}}$ of $\sym_{\boldvec{m}}$ and 
$\chi^{\boldvec{\mu}}$ of $\sym_{\boldvec{n}}$. 
We have 
\begin{equation*}
\begin{split} \Pch(\chi^{\boldvec{\lambda}}\circ \chi^{\boldvec{\mu}} )
&= \Pch\left(\bigotimes_{i=1}^t (\chi^{\lambda^i}\otimes \chi^{\mu^i})\uparrow_{\sym_{m_i}\times\sym_{n_i}}^{\sym_{m_i+n_i}}\right)\ \text{ from Definition~\ref{def:t-fold-IndProduct}}\\
&=\prod_{i=1}^t \ch\left((\chi^{\lambda^i}\otimes \chi^{\mu^i})\uparrow_{\sym_{m_i}\times\sym_{n_i}}^{\sym_{m_i+n_i}}\right)\ \text{ from Definition~\ref{def:Ych}}\\
&=\prod_{i=1}^t s_{\lambda^i}(X^i)s_{\mu^i} (X^i), 
\text{ since $\ch$ is a ring homomorphism on $R$ for}\\
&\text{the ordinary induction of characters}\\
&=\prod_{i=1}^t s_{\lambda^i}(X^i)\cdot \prod_{i=1}^t s_{\mu^i}(X^i)\\
&=\Pch(\bigotimes_{i=1}^t \chi^{\lambda^i}) \cdot 
\Pch(\bigotimes_{i=1}^t \chi^{\mu^i})\ \text{again using Definition~\ref{def:Ych}}\\
&=\Pch(\chi^{\boldvec{\lambda}})\cdot \Pch(\chi^{\boldvec{\mu}} ).
\end{split}
\end{equation*} 
Now let $\psi$ and $\phi$ be arbitrary characters of $\sym_{\boldvec{m}}$ and $\sym_{\boldvec{n}}$ respectively, with irreducible decompositions $\psi=\sum_{\boldvec{\lambda}}a_{\boldvec{\lambda}}(\psi) \chi^{\boldvec{\lambda}}$ and 
    $\phi=\sum_{\boldvec{\mu}}a_{\boldvec{\mu}}(\phi) \chi^{\boldvec{\mu}}$.
From~\eqref{eqn:gen-ind-product} and the linearity of the map $\Pch$ we have 
\begin{align*}\Pch(\psi\circ\phi)&=\sum_{\boldvec{\lambda}, \boldvec{\mu}} a_{\boldvec{\lambda}}(\psi)\,a_{\boldvec{\mu}}(\phi)\  
    \Pch(\chi^{\boldvec{\lambda}}\circ \chi^{\boldvec{\mu}})\\
    &=\sum_{\boldvec{\lambda}, \boldvec{\mu}} a_{\boldvec{\lambda}}(\psi)\,a_{\boldvec{\mu}}(\phi)
    \Pch(\chi^{\boldvec{\lambda}}) \Pch (\chi^{\boldvec{\mu}})\\
    &=\sum_{\boldvec{\lambda}}a_{\boldvec{\lambda}}(\psi) \Pch(\chi^{\boldvec{\lambda}})\cdot 
    \sum_{\boldvec{\mu}}a_{\boldvec{\mu}}(\phi) \Pch(\chi^{\boldvec{\mu}})\\
    &=\Pch(\psi)\cdot \Pch(\phi),
    \end{align*}
    which finishes the proof.
\end{proof}

The next proposition shows the equivalence of the above definition of induction product with the one in \cite{YLiqCSV2023}.  Before proving this proposition, some explanation is in order.  It is clear that $\bigtimes_{i=1}^t (\sym_{m_i}\times \sym_{n_i})$ is a direct product of  Young subgroups in $\sym_{\boldvec{m}+\boldvec{n} }$. The key point here is that we can  also view  $\sym_{\boldvec{m}}\times  \sym_{\boldvec{n}} $ as a subgroup of $ \sym_{\boldvec{m}+\boldvec{n} }$. In order to do this, we view all the permutations as acting on $2t$ disjoint sets of symbols $(\sqcup_{i=1}^t M_i)\sqcup (\sqcup_{j=1}^t N_j)$, where $|M_i|=m_i$ and $|N_i|=n_i$, $1\le i\le t.$  (Here $\sqcup$ denotes disjoint union.) 

Let $\sigma_i\in \sym_{m_i}=\sym (M_i), \tau_i\in \sym_{n_i}=\sym (N_i)$, $i=1, \ldots, t$, where $\sym (M_i)$ is the set of permutations on the letters in the set $M_i$, etc.
Thus the $t$-tuple $(\sigma_1, \ldots, \sigma_t)\in \sym_{\boldvec{m}}$ may be viewed as a product of commuting permutations $\prod_{i=1}^t \sigma_i$, and similarly for a $t$-tuple $(\tau_1, \ldots, \tau_t)\in \sym_{\boldvec{n}}$.  Likewise, 
the $t$-tuple of ordered pairs $((\sigma_1, \tau_1), \ldots, (\sigma_t, \tau_t))$ in $\bigtimes_{i=1}^t (\sym_{m_i}\times \sym_{n_i})$ can be identified with the product 
of commuting permutations $\prod_{i=1}^t \sigma_i \tau_i$.  With this identification, 
 $ \sym_{\boldvec{m}}\times  \sym_{\boldvec{n}} $  and $\bigtimes_{i=1}^t (\sym_{m_i}\times \sym_{n_i})$ coincide as subgroups of $ \sym_{\boldvec{m}+\boldvec{n} }$, which is identified with the direct product 
$\bigtimes_{i=1}^t \sym(M_i\sqcup N_i)$. 
\begin{prop}\label{prop:t-fold-ind-prod2} Let $\chi^{\boldvec{\lambda}}=\bigotimes_{i=1}^t \chi^{\lambda^i}$ and $\chi^{\boldvec{\mu}}=\bigotimes_{i=1}^t \chi^{\mu^i}$ be irreducible characters of $\sym_{\boldvec{m}}$ and $\sym_{\boldvec{n}}$, as in Definition~\ref{def:t-fold-IndProduct}. 
Then 
\begin{equation}\label{eqn:ind-prod-equiv} 
\chi^{\boldvec{\lambda}}\circ \chi^{\boldvec{\mu}}=
\left(\chi^{\boldvec{\lambda}}\otimes \chi^{\boldvec{\mu}}   \right)\big\uparrow_{(\bigtimes_{i=1}^t \sym_{m_i})\times  (\bigtimes_{i=1}^t \sym_{n_i})}^{\bigtimes_{i=1}^t \sym_{m_i+n_i}  },
\end{equation}
which can be rewritten 
\begin{equation}\label{eqn:ind-prod-equiv2}
\chi^{\boldvec{\lambda}}\circ \chi^{\boldvec{\mu}}
=\left(\chi^{\boldvec{\lambda}}\otimes \chi^{\boldvec{\mu}}   \right)\big\uparrow_{\sym_{\boldvec{m}}\times  \sym_{\boldvec{n}}}^{\sym_{\boldvec{m}+\boldvec{n} } }.
\end{equation}
More generally, if $\psi$ is a character of $\sym_{\boldvec{m}}$ and $\phi$ is a character of $\sym_{\boldvec{n}}$, then 
\begin{equation}\label{eqn:ind-prod-equiv-gen}
\psi\circ \phi = (\psi\otimes \phi)\big\uparrow_{\sym_{\boldvec{m}}\times  \sym_{\boldvec{n}}}^{\sym_{\boldvec{m}+\boldvec{n} } }.
\end{equation}
\end{prop}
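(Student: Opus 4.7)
The plan is to reduce everything to the standard fact that induction of characters commutes with direct products of groups: if $H_i\le G_i$ and $\chi_i$ is a character of $H_i$ for each $i=1,\ldots,t$, then
\[\bigotimes_{i=1}^t \left(\chi_i \uparrow_{H_i}^{G_i}\right) = \left(\bigotimes_{i=1}^t \chi_i\right) \uparrow_{\bigtimes_{i=1}^t H_i}^{\bigtimes_{i=1}^t G_i}\]
as characters of $\bigtimes_i G_i$.  I would establish this either by writing out both sides using the Frobenius induced-character formula (both involve the same sum over $t$-tuples of coset representatives, since a system of coset representatives for $\bigtimes_i H_i$ in $\bigtimes_i G_i$ is given by products of individual coset representatives for $H_i$ in $G_i$), or equivalently by invoking the module-theoretic description $\chi_i\uparrow_{H_i}^{G_i} = \bC[G_i]\otimes_{\bC[H_i]}V_i$ and noting the natural $\bigtimes_i G_i$-equivariant isomorphism $\bigotimes_i(\bC[G_i]\otimes_{\bC[H_i]}V_i)\cong \bC[\bigtimes_i G_i]\otimes_{\bC[\bigtimes_i H_i]}\bigotimes_i V_i$.

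Specializing this lemma to $H_i=\sym_{m_i}\times\sym_{n_i}$, $G_i=\sym_{m_i+n_i}$, and $\chi_i=\chi^{\lambda^i}\otimes\chi^{\mu^i}$, the definition \eqref{eqn:t-fold-ind} of the induction product is precisely the left-hand side of the displayed lemma, while the right-hand side becomes $(\chi^{\boldvec{\lambda}}\otimes\chi^{\boldvec{\mu}})\uparrow_{\bigtimes_i(\sym_{m_i}\times\sym_{n_i})}^{\bigtimes_i\sym_{m_i+n_i}}$.  This yields \eqref{eqn:ind-prod-equiv} directly, and \eqref{eqn:ind-prod-equiv2} is then just a notational restatement using the identification of $\bigtimes_i(\sym_{m_i}\times\sym_{n_i})$ with $\sym_{\boldvec{m}}\times\sym_{\boldvec{n}}$ as subgroups of $\sym_{\boldvec{m}+\boldvec{n}}$, viewed as acting on the disjoint symbol sets $(\sqcup_i M_i)\sqcup(\sqcup_i N_i)$, as laid out in the paragraph preceding the statement.

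Finally, \eqref{eqn:ind-prod-equiv-gen} follows by bilinearity.  The induction product $(\psi,\phi)\mapsto \psi\circ\phi$ is bilinear by definition \eqref{eqn:gen-ind-product}, and the operation $(\psi,\phi)\mapsto(\psi\otimes\phi)\uparrow_{\sym_{\boldvec{m}}\times\sym_{\boldvec{n}}}^{\sym_{\boldvec{m}+\boldvec{n}}}$ is also bilinear in its two arguments; since they agree on all pairs of irreducible characters by the previous paragraph, they agree on arbitrary pairs $(\psi,\phi)$.  The main obstacle is not any calculation but the careful bookkeeping of subgroups: one must keep the two viewpoints $\bigtimes_i(\sym_{m_i}\times\sym_{n_i})$ versus $(\bigtimes_i\sym_{m_i})\times(\bigtimes_i\sym_{n_i})$ straight, which is precisely what the disjoint-union identification in the preamble to the proposition is designed to handle.
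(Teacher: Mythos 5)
Your proposal is correct and takes essentially the same route as the paper: the same key lemma that induction distributes over direct products of groups (which the paper cites from the $t=2$ case, \cite[Lemma 3.8]{YLiqCSV2023}, extended by induction, whereas you prove it directly via coset representatives or the module description $\bC[G]\otimes_{\bC[H]}V$), followed by the same identification of $\bigtimes_{i=1}^t(\sym_{m_i}\times\sym_{n_i})$ with $\sym_{\boldvec{m}}\times\sym_{\boldvec{n}}$ inside $\sym_{\boldvec{m}+\boldvec{n}}$, and bilinearity for the general statement. The only step you compress is the explicit pointwise check that, under this identification, the characters $\bigotimes_{i=1}^t(\chi^{\lambda^i}\otimes\chi^{\mu^i})$ and $\chi^{\boldvec{\lambda}}\otimes\chi^{\boldvec{\mu}}$ coincide---the paper's computation with $f$ and $g$---which your closing bookkeeping remark correctly flags and which is trivially filled in.
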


\begin{proof}
A straightforward  computation (e.g., using the definition of induced module via cosets) shows that if $V_i$ is a representation of a  subgroup $H_i$ of a group $G_i$, $1\le i\le t$, then 
\begin{equation}\label{eqn:ind-distribute} \bigotimes_{i=1}^t (V_i\big\uparrow_{H_i}^{G_i}) \simeq (\bigotimes_{i=1}^t V_i)\big\uparrow_{\times_{i=1}^t H_i}^{\times_{i=1}^t G_i}.
\end{equation}

We now have 
\begin{align*} \chi^{\boldvec{\lambda}}\circ \chi^{\boldvec{\mu}}& = 
\bigotimes_{i=1}^t (\chi^{\lambda^i}\otimes \chi^{\mu^i})\big\uparrow_{\sym_{m_i}\times\sym_{n_i}}^{\sym_{m_i+n_i}}\\
&=\left(\bigotimes_{i=1}^t (\chi^{\lambda^i}\otimes \chi^{\mu^i})\right)\big\uparrow_{\bigtimes_{i=1}^t(\sym_{m_i}\times\sym_{n_i})}^{\bigtimes_{i=1}^t\sym_{m_i+n_i}} \text{ by ~\eqref{eqn:ind-distribute}}.
\end{align*}
Now we  claim that 
\begin{equation}\label{eqn:key-reordering}
\left(\bigotimes_{i=1}^t (\chi^{\lambda^i}\otimes\chi^{\mu^i}) \right)
\big\uparrow_{\bigtimes_{i=1}^t(\sym_{m_i}\times\sym_{n_i})}^{\sym_{\boldvec{m}+\boldvec{n}}}
=(\chi^{\boldvec{\lambda}}\otimes \chi^{\boldvec{\mu}})\big\uparrow_{ \sym_{\boldvec{m}}\times  \sym_{\boldvec{n}}}^{\sym_{\boldvec{m}+\boldvec{n}}}.
\end{equation}
 Write  $f$ for the character $\bigotimes_{i=1}^t (\chi^{\lambda^i}\otimes \chi^{\mu^i})$ of $\bigtimes_{i=1}^t (\sym_{m_i}\times \sym_{n_i})$ and $g$ for the character $\chi^{\boldvec{\lambda}}\otimes \chi^{\boldvec{\mu}}=(\bigotimes_{i=1}^t \chi^{\lambda^i}) \otimes (\bigotimes_{i=1}^t \chi^{\mu^i}) $ of $\sym_{\boldvec{m}}\times \sym_{\boldvec{n}}$. In view of the identifications made immediately preceding Proposition \ref{prop:t-fold-ind-prod2},  we have, for $\sigma_i\in \sym_{m_i}, \tau_i\in \sym_{n_i}$, $1\le i \le t$, 
\begin{equation*}
\begin{split}
&f((\sigma_1, \tau_1), \ldots, (\sigma_t, \tau_t)  )=\prod_{i=1}^t (\chi^{\lambda^i}\otimes \chi^{\mu^i})(\sigma_i, \tau_i)= \prod_{i=1}^t  \chi^{\lambda^i}(\sigma_i) \chi^{\mu^i}(\tau_i),\\
&g((\sigma_1, \ldots, \sigma_t) ,  (\tau_1, \ldots, \tau_t))
=\chi^{\boldvec{\lambda}}(\sigma_1, \ldots, \sigma_t)\cdot \chi^{\boldvec{\mu}}(\tau_1, \ldots, \tau_t)
= \prod_{i=1}^t  \chi^{\lambda^i}(\sigma_i)  \prod_{i=1}^t\chi^{\mu^i}(\tau_i),
\end{split}
\end{equation*}
which shows that the two expressions coincide, establishing~\eqref{eqn:ind-prod-equiv}.  Finally  \eqref{eqn:ind-prod-equiv-gen} follows by multilinearity.
\end{proof}

We illustrate these ideas with the case $t=2$. Let $\psi$ be a character of $\sym_{m_1}\times \sym_{m_2}$, and $\phi$ a character of $\sym_{n_1}\times \sym_{n_2}$.   Then by definition of the induction product and Proposition~\ref{prop:t-fold-ind-prod2}, $\psi\circ \phi$ is the induced module 
$(\psi\otimes \phi)\big\uparrow_{(\sym_{m_1}\times \sym_{m_2} )\times (\sym_{n_1}\times \sym_{n_2} )}^{\sym_{m_1+n_1}\times \sym_{m_2+n_2}}.$  Again, the key point established in the proof of~\eqref{eqn:key-reordering} above is that $\psi\otimes \phi $ can be viewed as a character of $(\sym_{m_1}\times \sym_{n_1} )\times (\sym_{m_2}\times \sym_{n_2} )$.

We will use the following important special case in the next section.
\begin{cor}\label{cor:all-parts-equal} Let $\psi$ be a character of the $t$-fold direct product $\sym_r^{\times t}$ and let $\phi$ be a character of the $t$-fold direct product $\sym_{n-r}^{\times t}$.
Then \[(\psi\otimes \phi)\uparrow_{\sym_r^{\times t}\times \sym_{n-r}^{\times t}}^{\sym_n^{\times t}}= \psi \circ \phi, \text{ and hence }
\Pch\left((\psi\otimes \phi)\uparrow_{\sym_r^{\times t}\times \sym_{n-r}^{\times t}}^{\sym_n^{\times t}}\right)= \Pch(\psi) \Pch(\phi).
\]
\end{cor}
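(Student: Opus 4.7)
The plan is to observe that this corollary is simply the special case of Proposition~\ref{prop:t-fold-ind-prod2}, specifically equation~\eqref{eqn:ind-prod-equiv-gen}, in which all components of the vectors $\boldvec{m}$ and $\boldvec{n}$ are equal. So the first step is to set $\boldvec{m}=(r,r,\ldots,r)$ (with $t$ entries) and $\boldvec{n}=(n-r,n-r,\ldots,n-r)$ (with $t$ entries), so that $\boldvec{m}+\boldvec{n}=(n,n,\ldots,n)$. With these choices, $\sym_{\boldvec{m}}=\sym_r^{\times t}$, $\sym_{\boldvec{n}}=\sym_{n-r}^{\times t}$, and $\sym_{\boldvec{m}+\boldvec{n}}=\sym_n^{\times t}$.

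The second step is to invoke \eqref{eqn:ind-prod-equiv-gen}, which gives
\[
\psi\circ\phi = (\psi\otimes\phi)\big\uparrow_{\sym_{\boldvec{m}}\times\sym_{\boldvec{n}}}^{\sym_{\boldvec{m}+\boldvec{n}}},
\]
and then substitute the identifications from the first step. This yields exactly the claimed identity. The identification of $\sym_{\boldvec{m}}\times\sym_{\boldvec{n}}$ with $\sym_r^{\times t}\times\sym_{n-r}^{\times t}$ as subgroups of $\sym_n^{\times t}$ is the one described in the paragraph preceding Proposition~\ref{prop:t-fold-ind-prod2}, viewing a $2t$-tuple of permutations as a product of commuting permutations acting on a disjoint union of $2t$ underlying sets, which in our situation consists of $t$ copies of an $r$-element set together with $t$ copies of an $(n-r)$-element set.

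There is no real obstacle here: the corollary is a direct specialization of the preceding proposition, and the only thing worth emphasizing in the writeup is that the apparent asymmetry between $\sym_r^{\times t}\times\sym_{n-r}^{\times t}$ (regarded as a product of the form $\sym_{\boldvec{m}}\times\sym_{\boldvec{n}}$) and the intrinsic Young subgroup $\bigtimes_{i=1}^t(\sym_r\times\sym_{n-r})$ of $\sym_n^{\times t}$ has already been resolved in the proof of \eqref{eqn:key-reordering}, so no further manipulation is required.
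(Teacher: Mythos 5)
Your proposal is correct and follows exactly the route the paper intends: the corollary is stated without proof precisely because it is the specialization of \eqref{eqn:ind-prod-equiv-gen} in Proposition~\ref{prop:t-fold-ind-prod2} to $\boldvec{m}=(r,\ldots,r)$ and $\boldvec{n}=(n-r,\ldots,n-r)$, with the subgroup identification already settled in the proof of \eqref{eqn:key-reordering}. Nothing is missing, and your remark that the reordering issue requires no further manipulation is exactly the right point to emphasize.
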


\section{The actions of $\sym_n^{\times t}$ and $\sym_n$ on the homology of $B_n^{(t)}$}\label{sec:tfold-Boolean-lattice-repn}

In this section we begin by generalizing \cite[Theorem~4.1]{YLiqCSV2023} to the $t$-fold Segre power of Boolean lattices $B_n^{(t)}$ for any $t$.  

From work of Bj\"orner and Welker \cite[Theorem 1, Corollary 9]{Segre_rees} it is known that the Segre product preserves the property of being (homotopy) Cohen-Macaulay, and hence by iteration, from the well-known fact that the Boolean lattice $B_n$ is homotopy Cohen-Macaulay, we know that the same is true for $B_n^{(t)}$.  In particular every open interval $(x,y)=((x_1,\ldots, x_t),(y_1,\ldots, y_t))$, where $0\le |x_i|=r<s=|y_j|\le n$ for all $1\le i,j\le t$, has the homotopy type of a wedge of spheres in the top dimension $s-r-2$, and hence the homology of $(x,y)$ vanishes in all but this top dimension.

We will use the Whitney homology technique of \cite{SundaramAIM1994} to determine the action of $\sym_n^{\times t}$ on the top homology module $\tilde{H}_{n-2}(B_n^{(t)})$. 

\begin{thm}[{\cite[Lemma 1.1 and Theorem 1.2]{SundaramAIM1994}}]\label{thm:Whit}
Let $Q$ be a bounded and ranked Cohen-Macaulay poset of rank $n$, and let $G$ be a finite group of automorphisms of $Q$.   Let $W\!H_r(Q)$ denote its $r$th Whitney  homology, defined by \[W\!H_r(Q)=\bigoplus_{x\in Q,\, \rk(x)=r} \tilde{H}_{r-2}(\hat 0, x).\] Then $W\!H_r(Q)$ is a $G$-module, and as virtual $G$-modules one has the identity 
\[\tilde{H}_{n-2}(Q)=\sum_{r=0}^{n-1} (-1)^{n-r+1} W\!H_r(Q).\]
\end{thm}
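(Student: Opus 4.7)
The plan is to apply the equivariant Hopf trace formula to the reduced chain complex of the order complex of $Q$, and then reorganize chains according to their topmost proper element. Because $Q$ is Cohen-Macaulay of rank $n$, every open subinterval $(\hat 0, x)$ is itself Cohen-Macaulay of rank $\rk(x)$, so its reduced homology is concentrated in the top dimension. In particular, each $\tilde H_{r-2}(\hat 0, x)$ is a genuine module for the stabilizer of $x$ in $G$; and since $G$ acts on $Q$ by rank-preserving poset automorphisms, it permutes the summands of $W\!H_r(Q)$ compatibly, so $W\!H_r(Q)$ inherits a genuine (not merely virtual) $G$-module structure.

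The first step is to invoke the equivariant Hopf trace formula: for any finite-dimensional $G$-chain complex $(C_*, \partial)$ over $\bQ$ one has
\[ \sum_i (-1)^i C_i = \sum_i (-1)^i H_i(C_*) \]
in the representation ring of $G$. Applied to the reduced chain complex of the proper part $\overline Q = Q\setminus\{\hat 0,\hat 1\}$ and combined with the fact that only $\tilde H_{n-2}$ is nonzero, this collapses to
\[ (-1)^{n-2}\tilde H_{n-2}(Q) = \sum_i (-1)^i \tilde C_i(\overline Q) \]
as virtual $G$-modules.

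The second step is to stratify each chain module by its topmost proper element. An $i$-chain $\hat 0 < x_0 < \cdots < x_i < \hat 1$ in $\overline Q$ is determined by its top element $x_i$ together with the residual $(i-1)$-chain $x_0 < \cdots < x_{i-1}$ in the open interval $(\hat 0, x_i)$, and this decomposition is $G$-equivariant because $G$ acts by poset automorphisms. Hence, as $G$-modules,
\[ \tilde C_i(\overline Q) \;\cong\; \bigoplus_{x\in\overline Q} \tilde C_{i-1}((\hat 0, x)). \]
Substituting and interchanging summations yields
\[ (-1)^{n-2}\tilde H_{n-2}(Q) = -\sum_{x\in\overline Q}\sum_j (-1)^j \tilde C_j((\hat 0,x)). \]

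Applying the Hopf trace formula and Cohen-Macaulayness once more to each open interval $(\hat 0, x)$ gives $\sum_j (-1)^j \tilde C_j((\hat 0, x)) = (-1)^{r-2}\tilde H_{r-2}(\hat 0, x)$ for $r = \rk(x) \geq 2$, with the low-rank cases handled by the standard conventions for reduced homology of the empty simplicial complex. Grouping the outer sum by rank assembles these contributions into $W\!H_r(Q)$, and a short sign computation, $(-1)^{n-2}\cdot(-1)\cdot(-1)^{r-2} = (-1)^{n-r+1}$, yields the claimed formula. The principal technical point to treat with care is the $G$-equivariance of the stratification by top element, but this is immediate because $G$ acts on $Q$ by rank-preserving poset automorphisms, so the decomposition of a chain into top element plus residual subchain is automatically $G$-compatible; no subtler representation-theoretic argument is required.
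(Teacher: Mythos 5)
Your overall strategy is sound, and it is in fact essentially the argument behind the paper's source for this statement: the paper itself gives no internal proof, citing \cite{Sheila1}, and the proof there is the same equivariant Hopf-trace computation with chains stratified by their top element and the Cohen--Macaulay collapse of each interval's alternating sum of chain groups onto its top homology. Your handling of the equivariance (the stabilizer of $x$ acts on $\tilde{H}_{r-2}(\hat 0,x)$ and $G$ permutes the summands of $W\!H_r(Q)$) is correct, as is the sign bookkeeping for the ranks $1\le r\le n-1$.

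There is, however, one concrete gap: your stratification $\tilde C_i(\overline Q)\cong\bigoplus_{x\in\overline Q}\tilde C_{i-1}\bigl((\hat 0,x)\bigr)$ is valid only for $i\ge 0$, since the empty chain spanning $\tilde C_{-1}(\overline Q)\cong\bQ$ has no top element. When you substitute and interchange summations, this term is silently discarded; the correct identity is
\[(-1)^{n-2}\,\tilde{H}_{n-2}(Q) \;=\; -\,\bQ \;-\; \sum_{x\in\overline Q}\,\sum_j (-1)^j\, \tilde C_j\bigl((\hat 0,x)\bigr).\]
Your appeal to ``standard conventions for the empty complex'' does not repair this: those conventions govern the atoms (where $r=1$, $(\hat 0,x)=\varnothing$, and indeed $\sum_j(-1)^j\tilde C_j=-\bQ=(-1)^{r-2}\tilde{H}_{r-2}$), but the outer sum runs over $x\in\overline Q$, i.e.\ over ranks $1,\dots,n-1$, and can never produce the $r=0$ term. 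As literally written, your grouping by rank yields $\tilde{H}_{n-2}(Q)=\sum_{r=1}^{n-1}(-1)^{n-r+1}W\!H_r(Q)$, which is false even at the level of dimensions: for $Q=B_2$ the left side has dimension $1$ while that right side has dimension $2$. The fix is one line: the retained term $-\bQ$ is exactly $-W\!H_0(Q)$, the conventional trivial module attached to the rank-$0$ element, and multiplying through by $(-1)^{n-2}$ turns it into $(-1)^{n+1}\bQ=(-1)^{n-0+1}W\!H_0(Q)$, which is precisely the missing $r=0$ summand in the theorem's formula $\sum_{r=0}^{n-1}(-1)^{n-r+1}W\!H_r(Q)$.
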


\begin{thm}\label{thm:def-rec-tfoldSegreBn} Fix $t\ge 1$. Set $\beta_0^{(t)}=1$, and for $n\ge 1$ denote by $\beta_n^{(t)}$ the product Frobenius characteristic $\Pch(\tilde{H}_{n-2}(B_n^{(t)}))$ of the top homology of the $t$-fold Segre power $B_n^{(t)}$.  Then $\beta_n^{(t)}$  satisfies the recurrence 
\begin{equation*} 
\sum_{i=0}^n (-1)^i \beta_i^{(t)}\prod_{j=1}^t h_{n-i}(X^j) =0.
\end{equation*}
\end{thm}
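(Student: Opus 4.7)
The plan is to apply the Whitney homology machinery of Theorem~\ref{thm:Whit} to $Q=B_n^{(t)}$, viewed as a Cohen-Macaulay poset equipped with the natural $\sym_n^{\times t}$-action, and then translate the resulting virtual module identity into symmetric functions via $\Pch$.

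First I would compute $W\!H_r(B_n^{(t)})$ as an $\sym_n^{\times t}$-module. The rank-$r$ elements of $B_n^{(t)}$ are $t$-tuples $x=(x_1,\ldots,x_t)$ with $|x_i|=r$; the group $\sym_n^{\times t}$ acts transitively on them with stabilizer (isomorphic to) $\sym_r^{\times t}\times \sym_{n-r}^{\times t}$, sitting inside $\sym_n^{\times t}$ as described in the paragraph before Proposition~\ref{prop:t-fold-ind-prod2}. For each such $x$, the open interval $(\hat 0, x)$ in $B_n^{(t)}$ is poset-isomorphic to $B_r^{(t)}$ with its $\hat 0$ and $\hat 1$ removed, so $\tilde H_{r-2}(\hat 0, x)\cong \tilde H_{r-2}(B_r^{(t)})$. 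The stabilizer of $x$ acts on this interval through its $\sym_r^{\times t}$ factor (the $\sym_{n-r}^{\times t}$ factor acts trivially). Summing over the orbit yields the induced module
\begin{equation*}
W\!H_r(B_n^{(t)})\cong \bigl(\tilde H_{r-2}(B_r^{(t)}) \otimes \mathbf{1}_{\sym_{n-r}^{\times t}}\bigr)\big\uparrow_{\sym_r^{\times t}\times \sym_{n-r}^{\times t}}^{\sym_n^{\times t}}.
\end{equation*}

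Next I would apply $\Pch$. By Corollary~\ref{cor:all-parts-equal}, the induction above is precisely the induction product $\tilde H_{r-2}(B_r^{(t)}) \circ \mathbf{1}_{\sym_{n-r}^{\times t}}$. Since $\Pch$ is a ring homomorphism with respect to $\circ$ (Proposition~\ref{prop:HomomorphismPch}), and since the trivial character of $\sym_{n-r}^{\times t}$ has $\Pch$-image $\prod_{j=1}^t h_{n-r}(X^j)$ (because $\ch$ of the trivial $\sym_{n-r}$-character is $h_{n-r}$), we obtain
\begin{equation*}
\Pch\bigl(W\!H_r(B_n^{(t)})\bigr)= \beta_r^{(t)}\cdot \prod_{j=1}^t h_{n-r}(X^j),\qquad 0\le r\le n-1,
\end{equation*}
with the $r=0$ case accounted for by the convention $\beta_0^{(t)}=1$ and $\tilde H_{-2}(\emptyset)=\bQ$.

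Finally I would apply $\Pch$ to both sides of the Whitney homology identity
\begin{equation*}
\tilde H_{n-2}(B_n^{(t)})=\sum_{r=0}^{n-1}(-1)^{n-r+1} W\!H_r(B_n^{(t)})
\end{equation*}
of Theorem~\ref{thm:Whit}, giving
\begin{equation*}
\beta_n^{(t)}=\sum_{r=0}^{n-1}(-1)^{n-r+1}\beta_r^{(t)}\prod_{j=1}^t h_{n-r}(X^j).
\end{equation*}
Moving $\beta_n^{(t)}$ to the right-hand side (noting $h_0=1$) and multiplying through by $(-1)^n$ converts this to
\begin{equation*}
\sum_{i=0}^n (-1)^i \beta_i^{(t)}\prod_{j=1}^t h_{n-i}(X^j)=0,
\end{equation*}
which is the claimed recurrence.

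The only genuinely nontrivial step is the identification of the stabilizer of a rank-$r$ element and, with it, the recognition that the ordinary induction appearing in Whitney homology is exactly the induction product $\circ$ of Definition~\ref{def:t-fold-IndProduct}; everything else is bookkeeping with signs and an application of Proposition~\ref{prop:HomomorphismPch}. Corollary~\ref{cor:all-parts-equal} is tailor-made to remove this obstacle, since in our setting the ``bottom'' and ``top'' halves of the Young subgroup have equal parts across the $t$ factors, so the hypothesis of that corollary is met.
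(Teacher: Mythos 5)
Your proposal is correct and takes essentially the same route as the paper's own proof: both apply Theorem~\ref{thm:Whit} to $B_n^{(t)}$, identify $\Whit_r(B_n^{(t)})$ as induced from the stabilizer $(\sym_r\times\sym_{n-r})^{\times t}$ of a rank-$r$ element (with the $\sym_{n-r}$-factors acting trivially), recognize that induction as the induction product via Corollary~\ref{cor:all-parts-equal}, and apply $\Pch$ using Proposition~\ref{prop:HomomorphismPch} to turn the alternating Whitney-homology identity into the stated recurrence. Your sign bookkeeping and the handling of the $r=0$ term match the paper's argument.
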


\begin{proof} When $t=1$, $B^{(t)}_n=B_n$ and $\beta^{(1)}_n=\beta_n$ is simply the Frobenius characteristic of the top homology of the Boolean lattice, so $\beta^{(1)}_n=\beta_n=e_n.$
In this case the above equation reduces to the well-known symmetric function identity 
$e_n=\sum_{i=0}^{n-1} (-1)^{n-i-1}e_i h_{n-i}$.

    We apply Theorem~\ref{thm:Whit} to the  Cohen-Macaulay poset $Q=B_n^{(t)}$, which has rank $n$. We need to compute the $\sym_n^{\times t}$-module structure of the Whitney homology $W\!H_r(Q)$. Let $x_0$ be the $t$-tuple $([r], [r], \ldots, [r])$, where $[r]=\{1,2,\ldots, r\}$; $x_0$ has rank $r$ in $Q$.   The stabiliser of  $x_0$ at rank $r$, and hence of the interval $(\hat 0, x_0)$,  is  $(\sym_r \times \sym_{n-r})^{\times t}$. The orbit of $x_0$ under the action of $\sym_n^{\times t}$  generates  all other elements at rank $r$,  and hence  $W\!H_r(Q)$ is $\sym_n^{\times t}$-isomorphic to a  module induced from the direct product  ${(\sym_r \times \sym_{n-r})^{\times t}}$.
    
    The copies of $\sym_{n-r}$ act trivially on $x_0$. More precisely, if  $((\sigma_1, \tau_1), \ldots, (\sigma_t, \tau_t))$ is an element of $(\sym_r \times \sym_{n-r})^{\times t}$, then the $t$-tuple $(\sigma_1,\ldots, \sigma_t)$ acts like the representation $\tilde{H}_{r-2}(B_r^{(t)})$, with product Frobenius characteristic  $\beta_r^{(t)}$, and the $t$-tuple $(\tau_1\ldots, \tau_t)$ acts trivially, i.e., like the representation $\bigotimes_{i=1}^t \mathbbm{1}_{\sym_{n-r}}$.  
    
    This is 
     the induction product of Definition~\ref{def:t-fold-IndProduct}, and by Proposition~\ref{prop:t-fold-ind-prod2} and Corollary~\ref{cor:all-parts-equal}, the induced module $W\!H_{r-2}(Q)$   coincides with 
    $\tilde{H}_{r-2}({B_r^{(t)}}) \circ (\mathbbm{1}_{\sym_{n-r}})^{\otimes t}$, whose product Frobenius characteristic is precisely $\beta_r^{(t)} \prod_{j=1}^t h_{n-r}(X^j)$.
\end{proof}
The dimension of the $r$th Whitney homology is given by 
\[\dim\left(\tilde{H}_{r-2}({B_r^{(t)}})\right) \binom{n}{r}^t=(-1)^r\mu({B_r^{(t)}})\binom{n}{r}^t=w_r^{(t)}\binom{n}{r}^t, \]
 where  $w^{(t)}_r$  is the number defined in Proposition~\ref{prop:mu-Bnt}. Hence Theorem~\ref{thm:def-rec-tfoldSegreBn} is the group-equivariant version of~\eqref{eqn:dim-Segre-homology}.
\begin{cor}\label{cor:mult} Let $n\ge 1$. In the $\sym_n^{\times t}$-module $\tilde{H}_{n-2}(B_n^{(t)})$, the multiplicity of 
\begin{enumerate}
    \item
the trivial representation $\bigotimes_{j=1}^t \chi^{(n)}$ is 
zero unless $n=1$, in which case it is 1; 
\item the sign representation $\bigotimes_{j=1}^t \chi^{(1^n)}$ is 1 for all $n\ge 1$.
\end{enumerate}
\end{cor}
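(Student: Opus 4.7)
The plan is to pass through the product Frobenius characteristic $\Pch$ and exploit the recurrence of Theorem~\ref{thm:def-rec-tfoldSegreBn}. Since $\Pch$ sends the irreducible $\chi^{\boldvec{\lambda}}$ of $\sym_n^{\times t}$ to $\prod_{j=1}^t s_{\lambda^j}(X^j)$, and these Schur-function products form an orthonormal basis of $\bigotimes_{j=1}^t \Lambda^n(X^j)$ under the inner product \eqref{eqn:inner-product}, the multiplicities I want are
\[
a_n := \big\langle \beta_n^{(t)},\,\textstyle\prod_j h_n(X^j)\big\rangle
\quad\text{and}\quad
b_n := \big\langle \beta_n^{(t)},\,\textstyle\prod_j e_n(X^j)\big\rangle,
\]
using $s_{(n)}=h_n$ (trivial character) and $s_{(1^n)}=e_n$ (sign character). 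Solving the recurrence of Theorem~\ref{thm:def-rec-tfoldSegreBn} for the top term gives
\[
\beta_n^{(t)}=\sum_{i=0}^{n-1}(-1)^{n-1-i}\,\beta_i^{(t)}\,\prod_{j=1}^t h_{n-i}(X^j),
\]
so both $a_n$ and $b_n$ can be evaluated by pairing this expression against the appropriate pure tensor.

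The key computational tool is the factorwise adjunction
\[
\big\langle F\cdot \textstyle\prod_j g_j(X^j),\,\prod_j h_j(X^j)\big\rangle
=\big\langle F,\,\prod_j g_j^{\perp}h_j(X^j)\big\rangle,
\]
which holds on pure tensors and extends bilinearly to all $F\in\bigotimes_j\Lambda(X^j)$. In a single alphabet I need only the Pieri-type identities $h_k^{\perp}h_n=h_{n-k}$ and $h_k^{\perp}e_n$ equals $e_n,\ e_{n-1},\ 0$ according as $k=0,\ 1,\ \ge 2$; the last case holds because a horizontal strip of length $\ge 2$ cannot be removed from the column shape $(1^n)$.

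For the trivial character, each summand contributes $\langle \beta_i^{(t)},\prod_j h_i(X^j)\rangle=a_i$, yielding $a_n=\sum_{i=0}^{n-1}(-1)^{n-1-i}a_i$ with $a_0=1$. A quick induction gives $a_1=1$ and, for $n\ge 2$, $a_n=(-1)^{n-1}+(-1)^{n-2}=0$, establishing part (1). For the sign character, the vanishing of $h_k^{\perp}e_n$ when $k\ge 2$ forces $n-i\le 1$ in every coordinate, so only the $i=n-1$ summand survives, and I obtain $b_n=b_{n-1}$ for $n\ge 2$. Combined with the direct evaluation $\beta_1^{(t)}=\prod_j h_1(X^j)=\prod_j e_1(X^j)$, which gives $b_1=1$, this proves $b_n=1$ for all $n\ge 1$, establishing part (2).

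The only subtlety is the $t$-fold tensor bookkeeping: $\beta_i^{(t)}$ is in general not a pure tensor, but since multiplication and skewing both extend bilinearly to $\bigotimes_j\Lambda(X^j)$ and the Pieri identities operate independently in each alphabet, the adjunction transfers verbatim and no genuine obstacle appears.
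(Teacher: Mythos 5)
Your proof is correct and takes essentially the same route as the paper's: the paper likewise deduces both multiplicities by pairing the recurrence of Theorem~\ref{thm:def-rec-tfoldSegreBn} against $\prod_{j=1}^t h_n(X^j)$ and $\prod_{j=1}^t e_n(X^j)$ under the inner product \eqref{eqn:inner-product}, stating the conclusion as ``clear.'' Your factorwise adjunction and Pieri computations ($h_k^{\perp}h_n=h_{n-k}$ and $h_k^{\perp}e_n=0$ for $k\ge 2$, yielding the scalar recurrences $a_n=\sum_{i=0}^{n-1}(-1)^{n-1-i}a_i$ and $b_n=b_{n-1}$) simply supply the details the paper leaves to the reader.
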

\begin{proof}  We deduce this from the recurrence of Theorem~\ref{thm:def-rec-tfoldSegreBn}, which we rewrite as 
\begin{equation}\label{eqn:rec-temp}
  \beta_n^{(t)}=\sum_{i=0}^{n-1} (-1)^{n-1-i}\beta_i^{(t)}\prod_{j=1}^t h_{n-i}(X^j). 
\end{equation} 
Now $\prod_{j=1}^t h_n(X^j)$ and $\prod_{j=1}^t e_n(X^j)$ are respectively the product Frobenius characteristics of the trivial and sign representations of $\sym_n^{\times t}$.  We use the inner product~\eqref{eqn:inner-product} to compute the multiplicities. 
We have, for all $n$,  using  the Kronecker delta $\delta_{n,1}$ which equals 1 if $n=1$ and zero otherwise:
\begin{align*}
&\langle \prod_{j=1}^t h_n(X^j) , \prod_{j=1}^t h_n(X^j)\rangle =\prod_{j=1}^t\langle h_n(X^j) , h_n(X^j) \rangle=1,\\
&\langle \prod_{j=1}^t h_n(X^j) , \prod_{j=1}^t e_n(X^j)\rangle =\prod_{j=1}^t\langle h_n(X^j) , e_n(X^j) \rangle=\delta_{n,1}.
\end{align*}
We compute 
\begin{align*}
\langle \beta_r^{(t)} \prod_{j=1}^t h_{n-r}(X^j), \prod_{j=1}^t h_{n}(X^j)\rangle 
&= \langle \beta_r^{(t)} \prod_{j=1}^t h_{n-r}(X^j), \prod_{j=1}^t h_{r}(X^j) \prod_{j=1}^t h_{n-r}(X^j)\rangle\\
&=\langle \beta_r^{(t)} , \prod_{j=1}^t h_{r}(X^j)\rangle \cdot \langle \prod_{j=1}^t h_{n-r}(X^j), \prod_{j=1}^t h_{n-r}(X^j)\rangle\\
&=\langle \beta_r^{(t)} , \prod_{j=1}^t h_{r}(X^j)\rangle\, \prod_{j=1}^t \langle h_{n-r}(X^j), h_{n-r}(X^j)\rangle.
\end{align*}

Since 
$\langle \beta_r^{(t)} , \prod_{j=1}^t h_{r}(X^j)\rangle=1$ for $r=0,1$, by induction  the recurrence~\eqref{eqn:rec-temp} gives Item (1).

Similarly, for $0\le r\le n-1$, we have 
\begin{align*}
\langle \beta_r^{(t)} \prod_{j=1}^t h_{n-r}(X^j), \prod_{j=1}^t e_{n}(X^j)\rangle 
&=\langle \beta_r^{(t)} , \prod_{j=1}^t e_{r}(X^j)\rangle  \delta_{n-r, 1}.
\end{align*}
Hence the recurrence~\eqref{eqn:rec-temp} now gives, for $n\ge 3$,
$\langle \beta_n^{(t)} , \prod_{j=1}^t e_{n}(X^j)\rangle =\langle \beta_{n-1}^{(t)} , \prod_{j=1}^t e_{n-1}(X^j)\rangle$.

By direct calculation, the multiplicity of the sign representation is zero in $\beta_n^{(t)}$ when $n=0,1$, and it is 1 for $n=2$. 
Item (2) now follows from~\eqref{eqn:rec-temp} by  induction.
 \end{proof}

Note the agreement with the case $t=1$, when $B_n^{(t)}$ is simply the Boolean lattice $B_n$, and so its homology carries the sign representation of $\sym_n$.

\begin{ex}\label{ex:small-reps} Let $t=2$. We use the recurrence to compute some of the symmetric functions $\beta^{(2)}_n$ in two sets of variables $X^1$, $X^2$. We have  
$\beta^{(2)}_0=1$ and 
$\beta^{(2)}_1=h_1(X^1) h_1(X^2),$ the product characteristic of the trivial representation of $\sym_1\times \sym_1$. Then the recurrence gives 
$\beta^{(2)}_2=\beta^{(2)}_1(h_1(X^1) h_1(X^2))-h_2(X^1) h_2(X^2)
=h_1^2(X^1) h_1^2(X^2) -h_2(X^1) h_2(X^2)$, and so 
\[\beta^{(2)}_2=e_2(X^1) h_2(X^2)+h_2(X^1)e_2(X^2)+e_2(X^1) e_2(X^2).\]
Similarly,
\begin{align*} 
\beta^{(2)}_3&=\beta^{(2)}_2 h_1(X^1) h_1(X^2) -\beta^{(2)}_1 h_2(X^1) h_2(X^2) +\beta^{(2)}_0 h_3(X^1) h_3(X^2)\\
&=h_1^3(X^1)h_1^3(X^2)-2 h_2(X^1) h_2(X^2) h_1(X^1) h_1(X^2)+
    h_3(X^1) h_3(X^2)\\
    &=(h_3(X^1) e_3(X^2) + e_3(X^1) h_3(X^2)) + e_3(X^1) e_3(X^2) 
    +s_{(2,1)}(X^1) s_{(2,1)}(X^2)\\
    &+2\left(s_{(2,1)}(X^1)e_3(X^2)+e_3(X^1)s_{(2,1)}(X^2)\right) .
\end{align*}
\end{ex}

By definition of the product Frobenius characteristic and the fact that $\tilde{H}_{n-1}(B_n^{(2)})$ is a true $(\sym_n\times \sym_n)$-module, it follows that $\beta^{(2)}_n$ must have a positive expansion in the basis $\{s_\lambda(X^1) s_\mu(X^2): \lambda, \mu\vdash n\}$. This is confirmed by the above examples.

\begin{dfn}\label{def:Zn and Zlambda}
Define $Z^{(t)}_{i}:=\prod_{j=1}^t h_i(X^j)$, and define the degree of $Z^{(t)}_i$ to be $i$. 

Also define, for each 
$\lambda\vdash n$, 
$Z^{(t)}_\lambda=\prod_j Z^{(t)}_{\lambda_j}$. Thus $Z^{(t)}_\lambda=\prod_{j=1}^t h_{\lambda}(X^j)$.
\end{dfn}

\begin{lem}
  The symmetric function $\beta^{(t)}_n$ can be written as a polynomial of homogeneous degree $n$ in $\{Z^{(t)}_i: 1\le i\le n\}$ such that for $n\ge 2$, the sum of the coefficients is 0. 
 \end{lem}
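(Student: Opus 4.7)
The plan is to use Theorem~\ref{thm:def-rec-tfoldSegreBn} as a recursive definition of $\beta_n^{(t)}$ in terms of the preceding $\beta_i^{(t)}$ and the $Z_k^{(t)}$, then compute the coefficient sum by applying the same recurrence. First, I would isolate the $i = n$ term in the identity of Theorem~\ref{thm:def-rec-tfoldSegreBn} (noting $Z_0^{(t)}=1$) to rewrite it as
\[
\beta_n^{(t)} = \sum_{i=0}^{n-1} (-1)^{n-i-1}\, \beta_i^{(t)}\, Z_{n-i}^{(t)}.
\]
Starting from $\beta_0^{(t)} = 1$ and inducting on $n$, this expresses each $\beta_n^{(t)}$ as a polynomial in $Z_1^{(t)}, \dots, Z_n^{(t)}$. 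Homogeneity of degree $n$ is then immediate from the induction: if $\beta_i^{(t)}$ is homogeneous of degree $i$ for every $i < n$, each summand $\beta_i^{(t)}\, Z_{n-i}^{(t)}$ has degree $i + (n - i) = n$.

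Next, I would compute the sum of coefficients by substituting $Z_k^{(t)} \mapsto 1$ in the polynomial representation just constructed. Let $c_n$ denote the resulting scalar. Applying this substitution to the recurrence above yields
\[
c_n = \sum_{i=0}^{n-1} (-1)^{n-i-1}\, c_i, \qquad n \geq 1,
\]
with $c_0 = 1$. A direct computation gives $c_1 = 1$, and then $c_2 = -c_0 + c_1 = 0$. I would then induct on $n \geq 3$: assuming $c_k = 0$ for $2 \leq k < n$, only the terms for $i = 0$ and $i = 1$ survive, giving
\[
c_n = (-1)^{n-1} c_0 + (-1)^{n-2} c_1 = (-1)^{n-1} + (-1)^{n-2} = 0.
\]

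Since the entire argument is a two-step induction off of Theorem~\ref{thm:def-rec-tfoldSegreBn}, there is no real technical obstacle. The only subtlety worth flagging is the interpretation of ``sum of coefficients,'' which is unambiguous in the specific polynomial representation constructed by the recurrence. One may additionally observe that because the homogeneous symmetric functions $h_k(X^j)$ are algebraically independent over distinct variable sets, the elements $\{Z_i^{(t)}\}_{i\geq 1}$ are themselves algebraically independent in $\bigotimes_{j=1}^t \Lambda(X^j)$, so the polynomial representation is in fact unique and $c_n$ is a well-defined invariant of $\beta_n^{(t)}$.
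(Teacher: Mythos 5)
Your proof is correct, and it reaches the conclusion by a genuinely different (and more elementary) route than the paper's. The paper's proof is representation-theoretic: since $\langle h_\lambda, s_{(n)}\rangle = 1$ for every $\lambda\vdash n$, each monomial $Z^{(t)}_\lambda$ contributes exactly $1$ to the multiplicity of the trivial $\sym_n^{\times t}$-representation, so the sum of the coefficients in the $Z$-expansion of $\beta^{(t)}_n$ \emph{is} that multiplicity, which Corollary~\ref{cor:mult} shows vanishes for $n\ge 2$. You instead stay entirely formal: specializing $Z^{(t)}_k\mapsto 1$ in the recurrence of Theorem~\ref{thm:def-rec-tfoldSegreBn} yields the scalar recurrence $c_n=\sum_{i=0}^{n-1}(-1)^{n-1-i}c_i$ with $c_0=c_1=1$, which your two-step induction solves correctly, giving $c_n=0$ for $n\ge 2$ (equivalently, $\sum_{n\ge 0} c_n u^n = 1+u$). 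The two arguments are secretly computing the same number --- your $c_n$ equals the trivial-representation multiplicity, and Corollary~\ref{cor:mult} is itself deduced from the same recurrence --- but your version buys independence from the machinery of Section~\ref{sec:prod_Frob} (in particular from the inner product \eqref{eqn:inner-product} and Corollary~\ref{cor:mult}), at the cost of losing the conceptual interpretation of the coefficient sum. Two smaller points in your favor: you spell out the homogeneity induction, which the paper leaves implicit, and your closing remark that the $Z^{(t)}_i$ are algebraically independent, so that ``sum of coefficients'' is well defined independently of the chosen representation, anticipates an observation the paper only records later, following Definition~\ref{def:map-Phi-S}.
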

 \begin{proof}
The $\beta^{(t)}_n$ are defined recursively by 
$\beta^{(t)}_1=Z_1^{(t)}$, $\beta^{(t)}_0=1$ and 
\begin{equation}\label{eqn:rec-beta}
\beta^{(t)}_n=\sum_{i=0}^{n-1} (-1)^{n-1-i} \beta^{(t)}_i Z_{n-i}^{(t)}.
\end{equation}
  The statement follows by induction using Corollary~\ref{cor:mult}, since each monomial in $Z^{(t)}_i$ contributes 1 to the multiplicity of the trivial representation, while the contribution from $\beta^{(t)}_n$ is 0 except for $\beta^{(t)}_1$ and $\beta^{(t)}_0$, where the contribution is 1.  \end{proof}

Recall  the recurrence for the Frobenius characteristic of the 
top homology of the Boolean lattice, namely, the symmetric function identity in $\Lambda_n(X)$ (in one set of variables $X$) given by \cite{Macd1995}
\begin{equation}\label{eqn:Booleanrec} e_n=\sum_{i=0}^{n-1} (-1)^{n-1-i} e_i h_{n-i} .\end{equation}
 This is equivalent to the generating function identity \cite{Macd1995}
\begin{equation}\label{eqn:Booleangf} \sum_{n\ge 0} u^n e_n=(\sum_{u\ge 0} u^n (-1)^n h_n)^{-1}.\end{equation}

\begin{dfn}\label{def:map-Phi-S}
 Define a map $\Phi_t:\Lambda(X) \rightarrow \bigotimes_{j=1}^t \Lambda(X^j)$ by setting 
\begin{equation}\label{eqn:mapPhi}
\Phi_t(h_n):=\prod_{j=1}^t h_n(X^j)=Z^{(t)}_n,
\end{equation}
and extending multiplicatively and linearly to all of $\Lambda(X)$.
This is well defined since the $h_n$ are algebraic generators for  $\Lambda(X)$.
\end{dfn}

\begin{prop}\label{prop:Phi-en-is-tfold-betaBoolean}  The map $\Phi_t:\Lambda(X)\rightarrow \bigotimes_{j=1}^t \Lambda(X^j)$  is an injective degree-preserving algebra homomorphism 
such that 
\[\Phi_t(e_n)=\beta_n^{(t)}.\] 
Moreover, $\{\beta^{(t)}_n\}_n$ is an algebraically independent set.
\end{prop}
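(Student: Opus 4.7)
The plan is to exploit the fact, already established in the paragraph immediately preceding the proposition, that $\Phi_t$ is a well-defined injective algebra homomorphism. With this in hand, the first assertion reduces to comparing two recurrences, and the second is immediate from injectivity together with a classical fact about the elementary symmetric functions.

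First I would apply $\Phi_t$ to the identity \eqref{eqn:Booleanrec}, namely $e_n=\sum_{i=0}^{n-1}(-1)^{n-1-i}e_i h_{n-i}$. Since $\Phi_t$ is an algebra homomorphism and $\Phi_t(h_k)=Z_k^{(t)}$ by \eqref{eqn:mapPhi}, this yields
\[
\Phi_t(e_n)=\sum_{i=0}^{n-1}(-1)^{n-1-i}\Phi_t(e_i)\,Z_{n-i}^{(t)},
\]
which is exactly the recurrence \eqref{eqn:rec-beta} for $\beta_n^{(t)}$. I would then check the initial conditions: $\Phi_t(e_0)=\Phi_t(1)=1=\beta_0^{(t)}$ and $\Phi_t(e_1)=\Phi_t(h_1)=Z_1^{(t)}=\beta_1^{(t)}$. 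A straightforward induction on $n$ then gives $\Phi_t(e_n)=\beta_n^{(t)}$ for all $n\ge 0$, establishing the first claim.

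For the second claim, I would invoke the standard fact (see \cite{Macd1995}) that the elementary symmetric functions $\{e_n\}_{n\ge 1}$ form a set of algebraically independent generators of $\Lambda(x)$. Since $\Phi_t$ is injective, it carries any nontrivial polynomial relation among the $\beta_n^{(t)}=\Phi_t(e_n)$ back to a nontrivial polynomial relation among the $e_n$, a contradiction. Hence $\{\beta_n^{(t)}\}_{n\ge 1}$ is algebraically independent in $\otimes_{j=1}^t\Lambda(X^j)$.

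There is no real obstacle here, since the technical content, namely the injectivity and degree-preservation of $\Phi_t$, has already been built up in the excerpt from the algebraic independence of the $Z_n^{(t)}$. The proof is thus essentially a matching-of-recurrences argument followed by a transport-of-independence along the injection $\Phi_t$.
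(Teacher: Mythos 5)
Your proof is correct and follows essentially the same route as the paper: applying the injective algebra homomorphism $\Phi_t$ to the identity \eqref{eqn:Booleanrec} to recover the recurrence \eqref{eqn:rec-beta} with matching initial conditions $\Phi_t(e_0)=1$ and $\Phi_t(e_1)=Z_1^{(t)}$, so that $\Phi_t(e_n)=\beta_n^{(t)}$ by induction. Your transport-of-independence argument via injectivity of $\Phi_t$ and the algebraic independence of the $e_n$ is exactly the justification the paper relies on (implicitly, from the paragraph preceding the proposition), so there is nothing to add.
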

\begin{proof}  Since the $\{h_\lambda(X^j)\}_\lambda$ are linearly independent in $\Lambda_n(X^j)$, the set $\{ Z^{(t)}_\lambda\}_{\lambda\vdash n}$ is linearly independent in $\bigotimes_{j=1}^t \Lambda_n(X^j)$, and so  the $Z^{(t)}_n$ are algebraically independent in $\bigotimes_{j=1}^t \Lambda(X^j)$.
The $h_n,\, n\ge 1$, are algebraically independent generators for the ring $\Lambda(X)$ \cite{Macd1995}.
Thus $\Phi_t$ extends to an injective degree-preserving algebra homomorphism 
$\Lambda(X)\rightarrow \bigotimes_{j=1}^t \Lambda(X^j)$, so that 
$\Phi_t(\Lambda_n(X))\subset \bigotimes_{j=1}^t \Lambda_n(X^j)$.  

In particular $\Phi_t(h_\lambda)=Z_\lambda^{(t)}$. Applying $\Phi_t$ to 
\eqref{eqn:Booleanrec} gives precisely the recurrence ~\eqref{eqn:rec-beta}, with the same initial conditions, since $\Phi_t(e_1)=\Phi_t(h_1)=Z^{(t)}_1$, $\Phi_t(e_0)=1=Z^{(t)}_0$.   Hence 
$\Phi_t(e_n)=\beta_n^{(t)}$.

The injectivity of $\Phi_t$ now implies that the $\beta_n^{(t)}$ must be algebraically independent.
\end{proof}

\begin{ex}\label{ex:Phi-S} To illustrate the workings of the map $\Phi_t$, we compute $\Phi_2(s_{(n-1,1)})$ for  $t=2$.  Since $s_{(n-1,1)}=h_{n-1}h_1-h_n,$ we have, 
for the two sets of variables $X^1, X^2$ as in  Example~\ref{ex:small-reps},
\begin{align*} \Phi_2(s_{(n-1,1)})&=\Phi_2(h_{n-1}h_1)-\Phi_2(h_n)\\
                                                  &=\Phi_2(h_{n-1})\Phi_2(h_1)-\Phi_2(h_n)\\
                                                 &=h_{n-1}(X^1)h_{n-1}(X^2)\, h_1(X^1)h_1(X^2)- h_n(X^1) h_n(X^2)\\
                                                  &=(s_{({n-1},1)}(X^1)+s_{(n)}(X^1))\,(s_{({n-1},1)}(X^2)+s_{(n)}(X^2)) -h_n(X^1) h_n(X^2)\\
                                           &=s_{({n-1},1)}(X^1)s_{({n-1},1)}(X^2)+ (s_{({n-1},1)}(X^1)s_{(n)}(X^2) + s_{({n-1},1)}(X^2) s_{(n)}(X^1)).
\end{align*}
For $t=3$ we would similarly have, 
 for the three sets of variables $X^i$, $i=1,2,3$,
\begin{align*} \Phi_3(s_{(n-1,1)})
                                &=\Phi_3(h_{n-1})\Phi_3(h_1)-\Phi_3(h_n)\\
                                &=h_{n-1}(X^1)h_{n-1}(X^2)h_{n-1}(X^3)\, h_1(X^1)h_1(X^2)h_{1}(X^3)- h_n(X^1) h_n(X^2)h_{n}(X^3)\\
                            &=s_{({n-1},1)}(X^1)s_{({n-1},1)}(X^2) s_{({n-1},1)}(X^3)+ (s_{({n-1},1)}(X^1) s_{(n)}(X^2) s_{({n-1},1)}(X^3)\\
                            &+  s_{({n-1},1)}(X^2) s_{(n)}(X^1) s_{({n-1},1)}(X^3) +s_{(n)}(X^1) s_{(n)}(X^2) s_{(n-1,1)}(X^3)\\
                            &+ s_{({n-1},1)}(X^1)s_{({n-1},1)}(X^2) s_{(n)}(X^3)\\
                            &+ s_{({n-1},1)}(X^1) s_{(n)}(X^2) s_{(n)}(X^3) +  s_{({n-1},1)}(X^2) s_{(n)}(X^1) s_{(n)}(X^3) 
                            .
\end{align*}
\end{ex}

  Example~\ref{ex:Phi-S} is a special case of Item (1)  of Proposition~\ref{prop:Phi-Jacobi-Trudi} below. Using the Jacobi-Trudi expansion \cite[Eqns. (3,4), (3,5)]{Macd1995} of the Schur function $s_\lambda$ in the basis of homogeneous symmetric functions $h_\mu$, from Definition~\ref{def:Zn and Zlambda} we obtain 
%
\begin{prop}\label{prop:Phi-Jacobi-Trudi} For $\lambda\vdash n$, let $\lambda'$ denote the conjugate partition of $\lambda$. Then 
\begin{enumerate}
\item $\Phi_t(s_\lambda)=\det(Z^{(t)}_{\lambda_i-i+j})_{1\le i,j\le \ell(\lambda)}$,
where $Z^{(t)}_0=1$ and we set $Z^{(t)}_m=0$ if $m<0$.
\item
$\Phi_t(s_{\lambda'})=\det(\beta^{(t)}_{\lambda_i-i+j})_{1\le i,j\le \ell(\lambda)}$, where 
$\beta^{(t)}_0=1$ and we set $\beta^{(t)}_m=0$ if $m<0$.
\end{enumerate}
\end{prop}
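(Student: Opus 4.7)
The plan is to simply apply the homomorphism $\Phi_t$ to both forms of the classical Jacobi-Trudi identity in $\Lambda(x)$. Recall \cite[Eqns. (3.4), (3.5)]{Macd1995} that for $\lambda\vdash n$ and any $N\ge \ell(\lambda)$,
\[s_\lambda = \det(h_{\lambda_i-i+j})_{1\le i,j\le N}, \qquad s_{\lambda'} = \det(e_{\lambda_i-i+j})_{1\le i,j\le N},\]
with the convention $h_0=e_0=1$ and $h_m=e_m=0$ for $m<0$. Since $\Phi_t:\Lambda(x)\to \bigotimes_{j=1}^t\Lambda(X^j)$ is an algebra homomorphism (it was established in the paragraph preceding Proposition~\ref{prop:Phi-en-is-tfold-betaBoolean} that $\Phi_t$ extends uniquely to a ring homomorphism on $\Lambda(x)$), and because any algebra homomorphism commutes with polynomial expressions in its inputs, $\Phi_t$ commutes with the formation of determinants of matrices whose entries are ring elements: that is, for a matrix $M=(m_{ij})$ with entries in $\Lambda(x)$,
\[\Phi_t(\det M) = \det(\Phi_t(m_{ij})).\]

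Applying this principle to the first Jacobi-Trudi identity and using $\Phi_t(h_n)=Z^{(t)}_n$ (by Definition~\ref{def:map-Phi-S}) yields Part (1) immediately, with the determinant taken over $1\le i,j\le \ell(\lambda)$ since the extra rows contribute only diagonal entries $Z^{(t)}_0 = \Phi_t(h_0) = 1$ and zeros above the diagonal, which do not affect the determinant. Applying the same principle to the second Jacobi-Trudi identity and invoking Proposition~\ref{prop:Phi-en-is-tfold-betaBoolean}, which gives $\Phi_t(e_n)=\beta^{(t)}_n$, yields Part (2). There is no genuine obstacle here; the only point meriting brief verification is that $\Phi_t$ respects the determinant expansion, which is automatic from its being a ring homomorphism into a commutative ring, together with the Leibniz formula $\det M = \sum_{\sigma\in\sym_N}\mathrm{sgn}(\sigma)\prod_i m_{i,\sigma(i)}$.
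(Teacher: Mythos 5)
Your proof is correct and takes essentially the same route as the paper, which likewise obtains both parts by applying the algebra homomorphism $\Phi_t$ to the two Jacobi--Trudi identities, using $\Phi_t(h_n)=Z^{(t)}_n$ and $\Phi_t(e_n)=\beta^{(t)}_n$ (the paper dismisses the verification as ``clear from the definitions''). Your explicit check that a ring homomorphism commutes with determinants via the Leibniz expansion, and your remark on reducing the matrix size from $N$ to $\ell(\lambda)$, merely spell out details the paper leaves implicit.
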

\begin{proof} The  two items are clear from the definitions.
\end{proof}

 For an integer partition $\lambda\vdash n$ of $n$, we write $\ell(\lambda)$ for the total number of parts of $\lambda$, and $m_i(\lambda)$ for the number of parts equal to $i$.   Also let 
$K_{\mu,\nu}$ be the Kostka number, i.e., the number of semistandard Young tableaux of shape $\mu\vdash n$ and weight $\nu\vdash n$ \cite[Eqn. (5.12)]{Macd1995}.  In particular, 
  $f^\mu=K_{\mu, (1^n)}$ is the number of standard Young tableaux of shape $\mu$.

\begin{dfn}\label{def:c-lambda} For $\lambda\vdash n$ with $m_i(\lambda)$ parts of size $i$ and number of parts $\ell(\lambda)$, define $c_\lambda$ to be the integer 
\[c_\lambda=(-1)^{n-\ell(\lambda)}\frac{\ell(\lambda)!}{\prod_i m_i(\lambda)!}.\]
\end{dfn}

The integers $c_\lambda$  play an important role in the irreducible decomposition of $\beta_n^{(t)}$, as we prove next.  First we record a fact that will be needed in the proof.
\begin{lem}\label{lem:Zlambda-into- Schurs} The multiplicity of the $\sym_n^{\times t}$-irreducible 
indexed by the $t$-tuple of partitions $\boldvec{\mu}=(\mu^1,\ldots,\mu^t)$,  $\mu^j\vdash n$, $1\le j\le t$, in the $\sym_n^{\times t}$-module with product Frobenius characteristic $Z_\lambda^{(t)}$ is \begin{center}$\prod_{j=1}^t K_{\mu^j, \lambda}$.\end{center}
\end{lem}
\begin{proof}  
We use the well-known expansion \cite{Macd1995}  $h_\lambda=\sum_{\mu\vdash n} K_{\mu, \lambda}\, s_\mu$. This gives,  for each set of variables $X^j$, $1\le j\le t$, 
$h_\lambda(X^j)=\sum_{\mu^j\vdash n} K_{\mu^j, \lambda}\,s_{\mu^j}(X^j).$

Since by definition, 
$Z_\lambda^{(t)}=\prod_{j=1}^t h_\lambda(X^j)$, the result follows.
\end{proof}

If $\{u_\lambda\}, 
\{v_\lambda\}$ are two sets of bases for the ring of symmetric functions $\Lambda_n(X)$, as in \cite[Ch1, Sec 6]{Macd1995} we write $\mathcal{M}(u,v)$ for the transition matrix from the basis $\{u_\lambda\}$  
to the basis 
$\{v_\lambda\}$.  More precisely, \begin{equation}\label{eqn:def-transition-matrix}
u_\lambda=\sum_\mu \mathcal{M}(u,v)_{\lambda, \mu}\, v_\mu.
\end{equation}

\begin{thm}\label{thm:Bnt-irreps} 
For the product Frobenius characteristic $\beta_n^{(t)}$ of the top homology of $B_n^{(t)}$, we have:
\begin{enumerate}
\item 
$\beta_n^{(t)}=\sum_{\lambda\vdash n} c_\lambda Z^{(t)}_\lambda$.
\item $\sum_{n\ge 0} u^n\beta^{(t)}_n=(\sum_{u\ge 0} u^n (-1)^n Z^{(t)}_n)^{-1}$.
\item The multiplicity of the $\sym_n^{\times t}$-irreducible 
indexed by the $t$-tuple of partitions $\boldvec{\mu}=(\mu^1,\ldots,\mu^t)$,  $\mu^j\vdash n$, $1\le j\le t$, 
 in $\tilde{H}_{n-2}(B_n^{(t)})$ 
equals 
\[ c^t_{\boldvec{\mu}}=\sum_{\lambda\vdash n} c_\lambda \prod_{j=1}^t K_{\mu^j, \lambda}.\]

\item Let  $\mathcal{M}(s,h)$ denote the transition matrix from the basis of Schur functions to the basis of homogeneous symmetric functions. 

 The multiplicity of the $\sym_n^{\times t}$-irreducible indexed by the $t$-tuple of partitions $(\mu^1, \ldots, \mu^t)$, $\mu^i\vdash n$, $1\le i\le t$, in the (possibly virtual) module whose product Frobenius characteristic is 
\begin{equation}\label{eqn:Phi-t-irreps}
\langle \Phi_t(s_\lambda), \prod_{j=1}^ts_{\mu^j}(X^j)   \rangle=
\sum_{\nu\vdash n} \mathcal{M}(s,h)_{\lambda, \nu} \prod_{j=1}^t K_{\mu^j, \nu}.\end{equation}
\end{enumerate}
\end{thm}
\begin{proof} 
Let $\mathcal{M}(e,h)$ 
denote the transition matrix from the basis of elementary symmetric functions  
to the basis of homogeneous symmetric functions  \cite[Chapter 1, Section 6]{Macd1995}.  Then 
\[e_n=\sum_{\lambda\vdash n} \mathcal{M}(e,h)_{(n), \lambda} h_\lambda,\] 
and hence, applying $\Phi_t$ and using Proposition~\ref{prop:Phi-en-is-tfold-betaBoolean}, 
\[\beta_n^{(t)}=\sum_{\lambda\vdash n} \mathcal{M}(e,h)_{(n), \lambda} Z^{(t)}_\lambda.\] 
Since $\mathcal{M}(s,h)$ is the transition matrix from the basis of Schur functions to the basis of homogeneous symmetric functions, its 
inverse $\mathcal{M}(h,s) $  is the transpose of the Kostka matrix $K=(K_{\lambda, \mu})$, and 
$\mathcal{M}(e,h)_{(n), \lambda}
=\mathcal{M}(s,h)_{(1^n), \lambda}=(K^{-1})_{\lambda, (1^n)}.$
E{\v g}ecio{\v g}lu and Remmel \cite[Corollary 1, Part (iv)]{EgeciogluRemmel1990} give an explicit formula for  the signed numbers $(K^{-1})_{\lambda, (1^n)}$, namely, $(K^{-1})_{\lambda, (1^n)}=(-1)^{n-\ell(\lambda)}\frac{\ell(\lambda)!}{\prod_i m_i(\lambda)!}$. These are precisely the numbers $c_\lambda$ in Definition~\ref{def:c-lambda}.
This establishes  (1).

Similarly, applying $\Phi_t$ to \eqref{eqn:Booleangf} establishes  (2). 

Item (3) is now immediate from (1) by applying Lemma~\ref{lem:Zlambda-into- Schurs}.
 
Item (4)  results from applying $\Phi_t$ to the identity 
\[s_\lambda=\sum_{\nu\vdash n} \mathcal{M}(s,h)_{\lambda,\nu} h_\nu,\] and using Lemma~\ref{lem:Zlambda-into- Schurs} again. 
\end{proof}
The following example illustrates Equation~\eqref{eqn:Phi-t-irreps}.
\begin{ex}\label{ex:Proof-prop:Phi-Jacobi-Trudi} 
     Let $\lambda=(3,2,1)$. The Jacobi-Trudi determinant expands to give 
     $s_\lambda= h_{321}-h_{33}-h_{411}+h_{51}$. 
     Applying $\Phi_t$, we obtain, for the multiplicity of the $t$-tuple of partitions $(\mu^1, \ldots, \mu^t)$ of 6  in the module with product Frobenius characteristic $\Phi_t(s_\lambda)$, the expression
     \[
     \langle \Phi_t(s_\lambda), \prod_{j=1}^ts_{\mu^j}(X^j)   \rangle=
     \prod_{j=1}^t K_{\mu^j, 321}-\prod_{j=1}^t K_{\mu^j, 33}- \prod_{j=1}^t K_{\mu^j, 411}  
     + \prod_{j=1}^t K_{\mu^j, 51}.\] 
\end{ex}

\begin{ex}\label{ex:small-reps-redux}  Computing $\beta_3^{(2)}$ using the above formulas gives 
$\beta_3^{(2)}=Z^{(2)}_{(3)}-2Z^{(2)}_{(2,1)}+Z^{(2)}_{(1^3)}$, since $e_3=h_3-2h_2h_1+h_1^3$.  Since $h_2h_1=s_{(3)}+s_{(2,1)}$ and $h_1^3=s_{(3)}+2s_{(2,1)}+s_{(1^3)}$, we obtain 
\begin{align*}\beta_3^{(2)}&=Z^{(2)}_{(3)}-2Z^{(2)}_{(2,1)}+Z^{(2)}_{(1^3)}\\
&= s_{(3)}(X^1)s_{(3)}(X^2) -2(s_{(3)}(X^1)+s_{(2,1)}(X^1))(s_{(3)}(X^2)+s_{(2,1)}(X^2))\\
& +(s_{(3)}(X^1)+2s_{(2,1)}(X^1)+s_{(1^3)}(X^1))  (s_{(3)}(X^2)+2s_{(2,1)}(X^2)+s_{(1^3)}(X^2)),
\end{align*}
which, upon simplification,  agrees with Example~\ref{ex:small-reps}.
\end{ex}

Item (1) generalises the result of E{\v g}ecio{\v g}lu and Remmel \cite[Corollary 1, Part (iv)]{EgeciogluRemmel1990} asserting that 
\begin{equation}\label{eqn:Eg-Remmel-en-to-h} e_n=\sum_{\lambda\vdash n} c_\lambda h_\lambda.\end{equation}
\begin{cor}\label{cor:irredS-beta-n-t}  Fix  $t\ge 1$.  Then the following hold.
\begin{enumerate}
    \item
For any fixed  $t$-tuple of partitions  $(\mu_1, \ldots, \mu_t)$ of $n$, the sum
\[ c^t_{\underline{\mu}}=\sum_{\lambda\vdash n} c_\lambda \prod_{j=1}^t K_{\mu^j, \lambda}
=\sum_{\lambda\vdash n} (-1)^{n-\ell(\lambda)}\frac{\ell(\lambda)!}{\prod_i m_i(\lambda)!} \prod_{j=1}^t K_{\mu^j, \lambda}\] 
is a  nonnegative integer.
\item  For $n\ge 2$, $\sum_{\lambda\vdash n}c_\lambda=0$.
\item $\sum_{\lambda\vdash n}c_\lambda\, \frac{n!}{\prod_{i\ge 1} \lambda_i! } =1$.
\end{enumerate}
\end{cor} 
\begin{proof}
The first part follows from Theorem~\ref{thm:Bnt-irreps}, Item (3).
      Items (2) and (3) are obtained from~\eqref{eqn:Eg-Remmel-en-to-h}, by respectively taking the multiplicity of the trivial representation, and dimensions. 
\end{proof}

The results of Corollary~\ref{cor:mult} can now be recovered from Item (1) of Theorem~\ref{thm:Bnt-irreps}, since $\langle h_\lambda, e_n\rangle= \delta_{\lambda, 1^n}$ and $\langle h_\lambda, h_n\rangle= 1$ for all $n.$

\begin{rem}\label{rem:Phit-Schur-true-module?} In contrast to Corollary~\ref{cor:irredS-beta-n-t}, the expressions in Item (4) of Theorem~\ref{thm:Bnt-irreps} 
may be negative integers; for arbitrary $\lambda$, 
the $\sym_n^{\times t}$-module whose product Frobenius characteristic is $\Phi_t(s_\lambda)$ may NOT be a true module. 
Using  SageMath reveals  the following counterexamples. 
  
  \begin{enumerate}
      \item  If $\lambda=322$, then for the 2-tuple 
$(\mu^1, \mu^2)=(43, 61)$, the multiplicity of $(\mu^1, \mu^2)$ in the module whose product Frobenius characteristic is $\Phi_2(s_{322})$ is $-1$.  Moreover, 
this implies that the module $\Phi_t(s_{322})$ does not correspond to a true module for any $t\ge 2$.
\item If $\lambda=2221$, the multiplicity of the 2-tuple 
$(\mu^1, \mu^2)=(331, 511)$ in the module corresponding to $\Phi_2(s_{2221})$ is $-2$.
  \end{enumerate}   
\end{rem}
However, we have the following.
\begin{prop}\label{prop:Phit-Schur-pos-cases} Let $\lambda\vdash n$.  Then 
$\Phi_t(h_\lambda)$ and $\Phi_t(e_\lambda)$ are the product Frobenius characteristics of  true $\sym_n^{\times t}$-modules.  

In addition, $\Phi_t(s_\lambda)$ is the product Frobenius characteristic of a true $\sym_n^{\times t}$-module if $\lambda$ has at most two parts.
\end{prop}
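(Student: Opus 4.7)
The plan is to establish each of the three assertions separately, exploiting the fact (Proposition~\ref{prop:HomomorphismPch}) that $\Pch$ is a ring isomorphism from $\underline{R}$ to $\bigotimes_{j=1}^t \Lambda(X^j)$ with respect to the induction product~$\circ$, so that products of product Frobenius characteristics correspond to induction products of actual modules.

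For $\Phi_t(h_\lambda)$, I would first observe that $\Phi_t(h_n) = Z_n^{(t)} = \prod_{j=1}^t h_n(X^j) = \Pch\bigl(\bigotimes_{j=1}^t 1_{\sym_n}\bigr)$, the product Frobenius characteristic of the trivial $\sym_n^{\times t}$-module. Writing $\lambda=(\lambda_1,\ldots,\lambda_\ell)$ and combining multiplicativity of $\Phi_t$ with the homomorphism property of $\Pch$, one expresses $\Phi_t(h_\lambda) = \prod_i Z_{\lambda_i}^{(t)}$ as the product Frobenius characteristic of the iterated induction product of the trivial representations $\bigotimes_{j=1}^t 1_{\sym_{\lambda_i}}$, which is manifestly a true module. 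The argument for $\Phi_t(e_\lambda)$ is formally identical: by Proposition~\ref{prop:Phi-en-is-tfold-betaBoolean}, $\Phi_t(e_n) = \beta_n^{(t)} = \Pch(\tilde H_{n-2}(B_n^{(t)}))$, so $\Phi_t(e_\lambda)$ is realised as the $\Pch$ of the iterated induction product $\tilde H_{\lambda_1-2}(B_{\lambda_1}^{(t)}) \circ \cdots \circ \tilde H_{\lambda_\ell-2}(B_{\lambda_\ell}^{(t)})$.

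For $\Phi_t(s_\lambda)$ with $\ell(\lambda)\leq 2$, the case $\lambda=(n)$ reduces to the previous situation because $s_{(n)}=h_n$. For $\lambda=(\lambda_1,\lambda_2)$ with $\lambda_2\geq 1$, the plan is to apply the Jacobi--Trudi identity $s_\lambda = h_{\lambda_1} h_{\lambda_2} - h_{\lambda_1+1} h_{\lambda_2-1}$ and then expand each factor using $h_a(X^j)h_b(X^j) = \sum_{\mu} K_{\mu,(a,b)} s_{\mu}(X^j)$. This identifies the coefficient of $\prod_j s_{\mu^j}(X^j)$ in $\Phi_t(s_\lambda)$ as
\[\prod_{j=1}^t K_{\mu^j,(\lambda_1,\lambda_2)} \;-\; \prod_{j=1}^t K_{\mu^j,(\lambda_1+1,\lambda_2-1)},\]
and the task becomes showing that this difference is a nonnegative integer for every $t$-tuple $(\mu^1,\ldots,\mu^t)$ of partitions of $n$.

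The key ingredient is the classical dominance monotonicity of Kostka numbers: since $(\lambda_1+1,\lambda_2-1)$ strictly dominates $(\lambda_1,\lambda_2)$, a Bender--Knuth-style injection of semistandard tableaux yields $K_{\mu,(\lambda_1+1,\lambda_2-1)} \leq K_{\mu,(\lambda_1,\lambda_2)}$ for every $\mu\vdash n$; since all Kostka numbers are nonnegative, multiplying $t$ such inequalities preserves the direction and the displayed expression is indeed $\geq 0$. The main obstacle to extending this argument beyond two parts is genuine rather than cosmetic: for $\ell(\lambda)\geq 3$ the Jacobi--Trudi determinant involves more than two terms with alternating signs, and no coefficient-wise comparison can control the cancellations; indeed, Remark~\ref{rem:Phit-Schur-true-module?} shows via $\lambda=322$ that the resulting multiplicities can genuinely become negative, so the hypothesis $\ell(\lambda)\leq 2$ is essential.
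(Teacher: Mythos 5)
Your proposal is correct and follows essentially the same path as the paper: multiplicativity of $\Phi_t$ and the homomorphism property of $\Pch$ (Proposition~\ref{prop:HomomorphismPch}) handle $\Phi_t(h_\lambda)$ and $\Phi_t(e_\lambda)$ via iterated induction products of trivial modules and of the homology modules $\tilde{H}_{\lambda_i-2}(B_{\lambda_i}^{(t)})$, and for two-row $\lambda$ the Jacobi--Trudi identity reduces everything to the coefficientwise inequality $\prod_{j=1}^t K_{\mu^j,(\lambda_1,\lambda_2)} \geq \prod_{j=1}^t K_{\mu^j,(\lambda_1+1,\lambda_2-1)}$, exactly as in the paper. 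The only divergence is how the single-factor Kostka inequality is justified: the paper applies Jacobi--Trudi a second time to compute the difference exactly, $K_{\mu,(a,b)}-K_{\mu,(a+1,b-1)}=\langle s_\mu,\, h_ah_b-h_{a+1}h_{b-1}\rangle=\langle s_\mu, s_{(a,b)}\rangle$, which is $1$ if $\mu=(a,b)$ and $0$ otherwise, whereas you invoke the classical dominance monotonicity of Kostka numbers via a Bender--Knuth-style injection---both are valid, and together with the nonnegativity of Kostka numbers (which you correctly note is needed to multiply the $t$ inequalities) they yield the same conclusion.
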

\begin{proof}
From Definition~\ref{def:map-Phi-S},   $\Phi_t(h_n)$ is the  characteristic of the trivial $\sym_n^{\times t}$-module. Since $\Phi_t$ is defined   multiplicatively, we have $\Phi_t(h_\lambda)=\prod_j \Phi_t(h_{\lambda_j})=Z_\lambda^{(t)}$, which  is the  characteristic of a true module.
 
 Theorem~\ref{thm:Bnt-irreps} tells us that $\Phi_t(e_n)$ is the product Frobenius characteristic $\beta_n^{(t)}$  of the top homology module of $B_n^{(t)}$.  Again we have $\Phi_t(e_\lambda)=\prod_j \Phi_t(e_{\lambda_j})=\prod_j \beta_{\lambda_j}^{(t)}$, and the result follows for $\Phi_t(e_\lambda)$.

 Now let $\lambda=(a,b)$ where $a\ge b\ge 1$. Consider the $t$-tuple $(\mu^1,\ldots, \mu^t)$ of partitions of $n$. The coefficient of $\prod_{j=1}^t s_{\mu^j} (X^j)$  in $\Phi_t(s_\lambda)$ is given by 
 \begin{equation}\label{eqn:coeff-2-parts}
 \prod_{j=1}^t K_{\mu^j, (a,b)} -\prod_{j=1}^t K_{\mu^j, (a+1, b-1)}. 
 \end{equation}
 We claim that for any partition $\mu$ of $a+b$, 
 $ K_{\mu, (a,b) }\ge K_{\mu, (a+1, b-1) }
 $. 

Recall that the Kostka number $K_{\mu,\nu}$ equals the inner product $\langle s_\mu, h_\nu\rangle$. Hence, by the Jacobi-Trudi identity, 
  \[K_{\mu, (a,b)} -K_{\mu, (a+1, b-1) } 
 =\langle s_\mu, h_a h_b- h_{a+1} h_{b-1}\rangle = \langle s_\mu, s_{(a,b)}\rangle,\]
 so that the difference 
 $ K_{\mu, (a,b)} -K_{\mu, (a+1, b-1)}$ is 1 if  $\mu=(a,b)$, and 0 otherwise.  The claim follows.
 
 This shows that, in \eqref{eqn:coeff-2-parts},  every factor in the second product  is less than or equal to the corresponding factor in the first product.
Consequently the coefficient~\eqref{eqn:coeff-2-parts} is nonnegative, finishing the proof.
\end{proof}

\begin{rem}\label{rem:dimS-beta-nt} Take dimensions in Item (3) of Theorem~\ref{thm:Bnt-irreps}.
We obtain the following curious enumerative identity.
\begin{equation}\label{eqn:dim-beta-nt-irrepS}
 w_n^{(t)}=\sum_{\lambda\vdash n}\sum_{\substack{\mu^i\vdash n\\1\le i\le t}} 
 (-1)^{n-\ell(\lambda)}\binom{\ell(\lambda)}{m_1(\lambda), m_(\lambda),\ldots} \,\prod_{j=1}^t K_{\mu^j, (1^n)} K_{\mu^j, \lambda}.
\end{equation}
\end{rem}

Table~\ref{tab:wn-t} is compiled from OEIS A212855.
\begin{table}[htbp]
\begin{center}
\begin{tabular}{|c|c|c|c|c|c|c|}
\hline
$t\backslash n$ & 0 & 1  &2 & 3 & 4 & 5   \\
[2pt]\hline
$ t=1$ & {1} & {1} & {  1} & {  1} & {  1} & {  1} \\
$2$ & {  1} & {  1} & {  3} & {  19} & {  211} & {  3651} \\
$3$ & 1 & {  1} & {  7} & {  163} & {  8983} & 966751 \\
$4$ & 1 & 1 & 15 & 1135 & 271375 &  \\
$5$ & 1 & 1 & 31 & 7291 & { 7225951} & \\
$6$ & 1 & 1 & 63 & 45199 & 182199871 &  \\
%
\hline
\end{tabular}
\end{center}
\vskip .1in
\caption{The numbers $w_n^{(t)}$ for $0\le n\le 5$, $1\le t\le 6$}
\label{tab:wn-t}
\end{table}
\subsection{The diagonal action of $\sym_n$ on $B_n^{(t)}$} 
Since the symmetric group $\sym_n$ itself acts diagonally on the $t$-fold Segre power $B_n^{(t)}$, we can ask for a description of this diagonal action.
 Recall the definition of 
$c^t_{\boldvec{\mu}}$ from Theorem~\ref{thm:Bnt-irreps}.  Also let $g^\lambda_{\boldvec{\mu}}$ denote the Kronecker coefficient \cite{Macd1995} 
$\langle  \chi^\lambda, \prod_{j=1}^t \chi^{\mu_j}\rangle$, 
that is, the multiplicity of the $\sym_n$-irreducible $\chi^\lambda$ in the tensor product of $\sym_n$-irreducibles $\chi^{\mu^j},$ 
$1\le j\le t$.  Finally recall \cite{Macd1995} that $*$ denotes the internal product in the ring of symmetric functions $\Lambda^n(X)$ in a single set of variables $X$.

From Theorem~\ref{thm:Bnt-irreps} we deduce the following. 
\begin{thm}\label{thm:Sn-diag-action-homology-Bn-t}
For the diagonal $\sym_n$-action on $\tilde{H}_{n-2}(B_n^{(t)})$:
\begin{enumerate}
\item
The (ordinary) Frobenius characteristic is this signed sum of characteristics of permutation modules:
\[\ch\, \tilde{H}_{n-2}(B_n^{(t)})=\sum_{\lambda\vdash n} c_\lambda \, (h_\lambda)^{*t},\]
writing 
$(h_\lambda)^{*t}$ for $\underbrace{h_\lambda*h_\lambda*\cdots *  h_\lambda}_t.$
\item
The multiplicity of the $\sym_n$-irreducible 
indexed by $\lambda$ is   %
$ \sum_{\boldvec{\mu}} c^t_{\boldvec{\mu}} g^\lambda_{\boldvec{\mu}},$
where the sum is over all $t$-tuples of partitions $\boldvec{\mu}=(\mu^1,\ldots,\mu^t)$,  $\mu^j\vdash n$, $1\le j\le t$.  
Equivalently,  
\[\ch\, \tilde{H}_{n-2}(B_n^{(t)})=\sum_{\boldvec{\mu}} c^t_{\boldvec{\mu}} g^\lambda_{\boldvec{\mu}}\, s_\lambda
=\sum_{\boldvec{\mu}} c^t_{\boldvec{\mu}}\, 
s_{\mu^1}*\cdots * s_{\mu^t}.\]
\item
The trace of an element $\sigma\in \sym_n$  is 
$\sum_{\lambda\vdash n} c_\lambda (\chi^{M^\lambda}(\sigma))^t,$ 
where $\chi^{M^\lambda} $ is the character of the permutation module corresponding to $h_\lambda$.

It is also equal to
$\sum_{\boldvec{\mu}}  c^t_{\boldvec{\mu}}\,\prod_{j=1}^t\chi^{\mu^j}(\sigma)$.
\end{enumerate}

\end{thm}
\begin{proof} The first item follows directly from Item (1) of Theorem~\ref{thm:Bnt-irreps}.

Item (3) of Theorem~\ref{thm:Bnt-irreps} gives us the following decomposition for the $\sym_{n}^{\times t}$-action on 
    $\tilde{H}_{n-2}(B_n^{(t)})$:
\[\sum_{\boldvec{\mu}} c^t_{\boldvec{\mu}} \bigotimes_{j=1}^t\chi^{\mu^j}.\]

The action of $\sym_n$ is obtained by restricting the $\sym_{n}^{\times t}$-action to its diagonal subgroup which is isomorphic to $\sym_n$, giving the following for the trace of $\sigma\in\sym_n$ on 
    $\tilde{H}_{n-2}(B_n^{(t)})$:
    \[\sum_{\boldvec{\mu}} c^t_{\boldvec{\mu}} \bigotimes_{j=1}^t\chi^{\mu^j}(\sigma).\] 
By definition of the internal product $*$ and Kronecker coefficients, 
$\ch(\bigotimes_{j=1}^t\chi^{\mu^j})=s_{\mu^1}*\cdots * s_{\mu^t} $.  The remaining items  now follow.
\end{proof}
\begin{rem}\label{rem:Segre-character-values-ordered-brick-tabloids}
By a theorem of E{\v g}ecio{\v g}lu and Remmel \cite[p. 109-111, Part (9)]{ER1991TransitionMatrices} (see also \cite[Transition Matrices]{PerAlexandersson}),  the coefficient of the power sum $p_\mu$ in the expansion of $h_\lambda$ is $|OB_{\mu, \lambda}|/z_\mu$, where $OB_{\mu, \lambda}$ counts the number of \emph{ordered $\mu$-brick tabloids of shape $\lambda$}.  We refer the reader to \cite[pp.108-111]{ER1991TransitionMatrices} for definitions.  It follows that the value of the character of $\chi^{M^\lambda}$ on the conjugacy class $\mu$ is the nonnegative integer 
$|OB_{\mu, \lambda}|$, giving the expression 
\begin{equation}\label{eqn:charvalue-diag-action-bricks}
\sum_{\lambda\vdash n} c_\lambda\, (|OB_{\mu, \lambda}|)^t
\end{equation}
for the character value of $\tilde{H}_{n-2}(B_n^{(t)})$ on the conjugacy class indexed by  $\mu$.

\end{rem}

Taking dimensions in Item (1) of Theorem~\ref{thm:Sn-diag-action-homology-Bn-t}, we obtain:
\begin{cor}\label{cor:dim-Bnt} For $t\ge 1$, 
$w_n^{(t)}=\sum_{\lambda\vdash n} c_\lambda \left(
\frac{n!}{\prod_{i\ge 1} \lambda_i!}\right)^t$.  
\end{cor}
\begin{prop}\label{prop:Sn-action-explicit-formulas}
   We have the following explicit formulas for $n=2$ and $n=3$.
\begin{enumerate}
\item  For $n=2$: $\tilde{H}_{0}(B_2^{(t)})= 2^{t-1} \chi^{(1^2)} + (2^{t-1}-1) \chi^{(2)},$
 in agreement with the case $t=1$. In fact it is $e$-positive:
 \[\ch\,\tilde{H}_{0}(B_2^{(t)})=(2^{t-1}-1)e_{(1,1)}+e_2.\]
 The dimension is $w_2^{(t)}=2^t-1$.
 \item For $n=3$:  
 $\tilde{H}_{1}(B_3^{(t)})= 2^{t-1} \chi^{(1^3)} + (2^{t-1}-1) \chi^{(3)} 
 +(2^{t}-2) \sum_{r=1}^t \binom{t}{r}(\chi^{(2,1)})^{\otimes r} $, again  agreeing with the case $t=1$, 
 and it is $e$-positive:
 \[\ch\,\tilde{H}_{1}(B_3^{(t)})= (6^{t-1}-3^{t-1}) e_{(1,1,1)} +e_3.\]
 The dimension is $w_3^{(t)}=6(6^{t-1}-3^{t-1})+1$.   (See $\mathtt{OEIS}$   A248225, A127222 for the sequence $\{6^t-3^t\}$.)
 %
 
 \item For $n=4$:
$\ch\,\tilde{H}_{2}(B_4^{(t)})=e_4-(2^{t-1}-1) e_{2,1,1} +(2^{t-1}-1) e_{2,2}  + \gamma_4\, e_{1,1,1,1}
$

where $\gamma_4(t)=\frac{(4^{t-1}-1)}{3}+ \frac{3(6^{t-2}-2^{t-2})}{2} -17\cdot 12^{t-2} +2^{t-2} +24^{t-1}.$  

The dimension is $w_4^{(t)}=1-6(2^{t-1}-1) +\gamma_4(t)$.
\end{enumerate} 
\end{prop}
\begin{proof}  We sketch the proof for Item (3), omitting details of the brute-force  computations involving internal products. The case $n=2$ is also easily computed directly, since $B_2^{(t)}$ is a rank 2  poset.

We compute the right-hand side of Item (1) of Theorem~\ref{thm:Sn-diag-action-homology-Bn-t}.
First observe that  the definition of $c_\lambda$ gives 
$c_{(4)}=-1, c_{(31)}=2, c_{(22)}=1, c_{(211)}=-3 \text{ and } c_{(1^4)}=1.$ 

We repeatedly use the fact that for $H$ a subgroup of a group $G$ and $V$, $W$ respectively $H$- and $G$-modules, one has the $G$-module isomorphism $V\uparrow_H^G\otimes W\simeq (V\otimes W\downarrow^G_H)\uparrow_H^G$.

This gives $(h_1^4)^{*t}= 24^t h_1^4.$  Also $(h_4)^{*t}=h_4.$

From \cite[Lemma 6.1]{SuSubword2021}, and also \cite[Ch. 7, Supplementary Problems 137 (a)]{ec2}, we have 
\[(h_{31})^{*t}=h_{31}+(2^{t-1}-1)h_{21^2} +(S(t,3)+S(t,4)) h_{1^4},\]
where $S(n,k)$ is the Stirling number of the second kind; also 
$S(t,3)+S(t,4)=\frac{4^{t-1}+2}{6}-2^{t-2}.$ 

An inductive argument also gives the following closed formulas.  
\begin{enumerate}
    \item $(h_{22})^{*t}=2^{t-1} h_{22}+(6^{t-2}+ \frac{6^{t-2}-2^{t-2}}{2}) h_{1^4}$.
    \item $(h_{21^2})^{*t}=2^{t-1}h_{21^2}+ (12^{t-2} \cdot 5 +2^{t-2}(6^{t-2}-1))  h_{1^4} $.
\end{enumerate}
Since 
\[\ch\,\tilde{H}_{2}(B_4^{(t)})=
c_{(4)} (h_4)^{*t}+c_{(31)}(h_{31})^{*t}+c_{(22)} (h_{22})^{*t} + c_{(21^2)} (h_{21^2})^{*t}+ c_{(1^4)} (h_{1^4})^{*t},\]
putting these facts together gives the stated expansion in the elementary symmetric functions.
\end{proof}
The first few values of $\gamma_4(t)$ are 
$0, 9, 375, 11309, 01085,7591669, 186637045.$ 

Sage data for $n, t\le 7$  supports the following conjectures for the diagonal action.
\begin{conj}\label{conj:diag-en} The coefficient of $e_n$ in the elementary basis expansion of $\ch\,\tilde{H}_{n-2}(B_n^{(t)})$ is always 1. 
\end{conj}

\begin{conj}\label{conj:diag-inclusions} For $t\ge1$, there are $\sym_n$-equivariant inclusions 
$\tilde{H}_{n-2}(B_n^{(t)})\hookrightarrow \tilde{H}_{n-2}(B_n^{(t+1)}).$
\end{conj}

The next conjecture may be viewed as a stability result for the set of irreducibles appearing in $\tilde{H}_{n-2}(B_n^{(t)})$.
\begin{conj}\label{conj:diag-inclusions-support} For $t\ge 3$, any irreducible appearing in $\tilde{H}_{n-2}(B_n^{(t)})$ also appears in 
$\tilde{H}_{n-2}(B_n^{(2)})$.
\end{conj}

\section{Rank-selection in $B_n^{(t)}$}\label{sec:rank-selection}

      For a fixed subset $J$ of the nontrivial ranks $[1,n-1]$, the rank-selected subposet $B_n^{(t)}(J)$ is defined to be the bounded poset $\{x\in B_n^{(t)}: \rk(x)\in J\}$ with the top and bottom elements $\hat 0, \hat 1$ appended.  Since rank-selection preserves the property of being Cohen-Macaulay \cite[Theorem~6.4]{Bac1980} (see also the references in \cite[Page 226]{SundaramAIM1994}),  these posets have at most one nonvanishing homology module, which is in the top dimension. The study of the homology of rank-selected subposets was initiated in \cite{RPSGaP1982}. 
 
 Denote by $\beta_n^{(t)}(J)$ the product Frobenius characteristic of the top homology $\tilde{H}_{k-1}(B_n(J))$ of the rank-selected subposet of $B_n^{(t)}$ corresponding to the rank-set $J$.
Our strategy to determine the rank-selected representations of $B_n^{(t)}$ will be to use the known results \cite{RPSGaP1982} for rank-selection in the Boolean lattice, and then apply the homomorphism $\Phi_t$ to obtain the corresponding results for the Segre product 
$B_n^{(t)}$.  

Stanley's theory of rank-selected invariants for Cohen-Macaulay posets, discussed in Section~\ref{sec:Segre-EL}, takes the following group-equivariant form, first described in \cite{RPSGaP1982}.  Note that dropping the Cohen-Macaulay condition  results in a similar formulation with the top homology module of each rank-selected subposet being replaced by the alternating sum of homology modules, i.e., the Lefschetz module (see \cite{Sheila1}).
\begin{thm}[{\cite{RPSGaP1982}}]\label{thm:RPS-GAP} Let $G$ be a finite group of automorphisms of a bounded and ranked Cohen-Macaulay poset $P$.  For each subset $J$ of the nontrivial ranks, let $P(J)$ denote the bounded rank-selected subposet $\{x\in P: \rk(x)\in J\}\cup\{\hat 0, \hat 1\}$. Then $G$ is a group of automorphisms of the Cohen-Macaulay subposet $P(J)$, and consequently the maximal chains in $P(J)$ and the top homology of $P(J)$ carry representations of $G$.  Let $\alpha_P(J)$ and $\beta_P(J)$ respectively denote these representations.
Then 
\begin{equation*}
\begin{split}
\alpha_P(J)&=\sum_{U\subseteq J} \beta_P(U),\\
 \beta_P(J)&=\sum_{U\subseteq J}(-1)^{|J|-|U|} \alpha_P(U).
\end{split}
\end{equation*}
\end{thm}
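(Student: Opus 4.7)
The two displayed equations are equivalent by inclusion--exclusion (Möbius inversion) on the Boolean lattice of subsets of $J$: applied as virtual characters, they form a standard inverse pair. So the plan is to prove the first identity
\[\alpha_P(J)=\sum_{U\subseteq J}\beta_P(U),\]
and the second one will then follow formally.

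\textbf{Step 1: Set up the equivariant chain complex.} Let $\Delta(J)$ be the order complex of the open rank-selected poset $P(J)\smallsetminus\{\hat 0,\hat 1\}$. Its $i$-faces are chains of $i+1$ elements drawn from ranks in $J$, and $G$ acts simplicially because $G$ preserves ranks. Group chains of length $k$ by the \emph{exact rank-set} $U\subseteq J$, $|U|=k$, that they occupy: a chain of length $k$ with rank-set $U$ is the same thing as a maximal chain of $P(U)$ (once one appends $\hat 0,\hat 1$). Hence, as $\bQ G$-modules,
\[\tilde C_{k-1}(\Delta(J))\ \cong_G\ \bigoplus_{\substack{U\subseteq J\\ |U|=k}} \alpha_P(U),\]
with the convention that $k=0$ corresponds to the augmentation $\bQ$ and to $\alpha_P(\emptyset)$, the trivial representation on the unique maximal chain of the two-element poset $\{\hat 0,\hat 1\}$.

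\textbf{Step 2: Hopf trace / equivariant Euler characteristic.} Taking the alternating sum of the $\tilde C_i$ in the Grothendieck group $R(G)$ and applying the Hopf trace formula, we obtain
\[\sum_{U\subseteq J}(-1)^{|U|-1}\alpha_P(U)\ =\ \sum_{i\ge -1}(-1)^{i}\,\tilde H_i(\Delta(J))\]
as virtual $G$-representations.

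\textbf{Step 3: Collapse via Cohen--Macaulayness.} Since $P$ is Cohen--Macaulay and rank-selection preserves this property (by the references cited just before the theorem), the complex $\Delta(J)$ has vanishing reduced homology outside the top dimension $|J|-1$. Thus the right-hand side of Step~2 reduces to $(-1)^{|J|-1}\beta_P(J)$ by the very definition of $\beta_P(J)=\tilde H_{|J|-1}(\Delta(J))$.

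\textbf{Step 4: Compare and invert.} Equating the two expressions and multiplying through by $(-1)^{|J|-1}$ gives
\[\beta_P(J)\ =\ \sum_{U\subseteq J}(-1)^{|J|-|U|}\alpha_P(U),\]
which is the second identity. Möbius inversion on $2^J$ then yields the first identity
$\alpha_P(J)=\sum_{U\subseteq J}\beta_P(U)$, completing the proof.

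The main obstacle is conceptual rather than technical: one has to identify $k$-element chains supported on a rank-set $U$ with maximal chains of $P(U)$ \emph{equivariantly}, and be careful with the augmentation so that the $U=\emptyset$ term (which accounts for the augmentation term $-1$ in the reduced Euler characteristic) is absorbed correctly into the sum. The Cohen--Macaulay hypothesis is crucial for collapsing the alternating sum of homologies to the single top piece; without it, one only obtains an identity for the Lefschetz module, as noted in the preamble to the theorem.
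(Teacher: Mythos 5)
Your proposal is correct, and it coincides with the standard argument: the paper states this theorem without proof, citing \cite{RPSGaP1982}, and your Hopf-trace proof is essentially Stanley's original one from that source — decompose the reduced chain complex of the order complex of $\overline{P(J)}$ by rank support so that $\tilde C_{k-1}\cong_G\bigoplus_{U\subseteq J,\,|U|=k}\alpha_P(U)$, apply the Hopf trace theorem, and use Cohen--Macaulayness to collapse the homology side to $(-1)^{|J|-1}\beta_P(J)$, then invert over $2^J$. The one step you elide is that identifying $k$-element chains with rank support $U$ with maximal chains of $P(U)$ requires every maximal chain of $P(U)$ to meet every rank in $U$; this follows from the purity of $P(U)$, which is itself a consequence of the Cohen--Macaulay hypothesis, so the argument is complete once that is noted.
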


In the notation of Section~\ref{sec:Segre-EL},  the dimensions of the representations $\alpha_P(J)$, $\beta_P(J)$  are respectively $\tilde{\alpha}_P(J)$, $\tilde{\beta}_P(J)$.

Next we review the specific results for the Boolean lattice.
Let $J=\{1\le j_1<\cdots<j_r\le n-1\}$ be a subset of the nontrivial ranks $[n-1]$ of $B_n$. 
The $\sym_n$-module afforded by the maximal chains of $B_n(J)$ is easily seen to have Frobenius characteristic $\alpha_n(J)$ where 
\begin{equation}\label{eqn:chainsBn} \alpha_n(J)=h_{j_1} h_{j_2-j_1}\cdots h_{j_r-j_{r-1}}h_{n-j_r},
\end{equation}
since it is the permutation action on the cosets of the Young subgroup $\sym_{j_1}\times \sym_{j_2-j_1}\times\cdots\times \sym_{j_r-j_{r-1}}\times\sym_{n-j_r}$: the chains are permuted transitively by $\sym_n$, and the stabiliser of a chain is 
that Young subgroup.

Let $\beta_n(J)$ denote the homology of the rank-selected subposet $B_n(J)$ of the Boolean lattice $B_n$.  
For a standard Young tableau $\tau$ of shape $\lambda\vdash n$ (see \cite{ec2} for definitions), the descent set $\Des(\tau)$ of $\tau$ is the set of entries $i$ such that $i+1$ appears in a row strictly below $i$.  Using  Theorem~\ref{thm:RPS-GAP}, the  permutation module of 
Equation~\eqref{eqn:chainsBn},  and Robinson-Schensted insertion (see \cite{ec2}), 
Stanley  \cite[Section 4]{RPSGaP1982} now deduces the following result, originally due to Solomon. 
\begin{thm}[{\cite{SolJAlg1968, RPSGaP1982}}]\label{thm:RPS-ranks}  For any subset $J=\{1\le j_1<\cdots<j_r\le n-1\}$ of the nontrivial ranks of $B_n$, one has:
\begin{enumerate}
\item $\alpha_n(J)=\sum_{\lambda\vdash n} s_\lambda\ |\{ SYT\ \tau \text{ of shape  } \lambda:  \Des(\tau)\subseteq J\}|;$
\item $\beta_n(J)=\sum_{\lambda\vdash n} s_\lambda\ |\{ SYT\ \tau \text{ of shape  } \lambda:  \Des(\tau)=J\}|.$
\end{enumerate}
\end{thm}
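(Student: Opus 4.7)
The plan is to prove Item (1) by expanding the complete homogeneous basis expression in \eqref{eqn:chainsBn} in the Schur basis, then to derive Item (2) from Item (1) by applying the M\"obius inversion in Theorem~\ref{thm:RPS-GAP}.

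For Item (1), I would start from the formula
\[\alpha_n(J)=h_{j_1} h_{j_2-j_1}\cdots h_{j_r-j_{r-1}}h_{n-j_r}=h_\mu,\]
where $\mu=(j_1,j_2-j_1,\ldots,j_r-j_{r-1},n-j_r)$ is the composition of $n$ determined by $J$. The key symmetric function identity
\[h_\mu=\sum_{\lambda\vdash n} K_{\lambda,\mu}\, s_\lambda\]
from \cite[Chapter I]{Macd1995} reduces the proof to the combinatorial statement that $K_{\lambda,\mu}$ equals the number of standard Young tableaux $\tau$ of shape $\lambda$ with $\Des(\tau)\subseteq J$. The step that is classically proved via \emph{standardisation} of semistandard Young tableaux: given an SSYT of shape $\lambda$ and content $\mu$, one standardises by relabelling the entries equal to $i$ from left to right with consecutive integers in the block $(j_{i-1},j_i]$. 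The resulting SYT has the property that only indices belonging to $J$ can be descents, because each block of relabelled entries is strictly increasing from left to right. Conversely, any SYT with $\Des(\tau)\subseteq J$ arises in this fashion by grouping its entries into the blocks $(j_{i-1},j_i]$. This bijection is standard; see, e.g., \cite[Chapter 7]{ec2}.

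For Item (2), I would invoke Theorem~\ref{thm:RPS-GAP}, which gives
\[\beta_n(J)=\sum_{U\subseteq J}(-1)^{|J|-|U|}\alpha_n(U).\]
Substituting the formula from Item (1) and interchanging sums,
\[\beta_n(J)=\sum_{\lambda\vdash n} s_\lambda \sum_{U\subseteq J}(-1)^{|J|-|U|}\,|\{\tau\text{ SYT of shape }\lambda:\Des(\tau)\subseteq U\}|.\]
Writing $|\{\tau:\Des(\tau)\subseteq U\}|=\sum_{V\subseteq U}|\{\tau:\Des(\tau)=V\}|$ and exchanging the order of summation, the inner sum collapses by the standard Boolean-lattice M\"obius inversion: the contribution of a set $V$ is $|\{\tau:\Des(\tau)=V\}|\cdot\sum_{V\subseteq U\subseteq J}(-1)^{|J|-|U|}$, which vanishes unless $V=J$ and equals $1$ otherwise. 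This yields the desired formula
\[\beta_n(J)=\sum_{\lambda\vdash n} s_\lambda\,|\{\tau\text{ SYT of shape }\lambda:\Des(\tau)=J\}|.\]

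The only genuinely nontrivial ingredient is the standardisation bijection underlying the identification $K_{\lambda,\mu}=|\{\tau\ \mathrm{SYT}\ \text{of shape}\ \lambda:\Des(\tau)\subseteq J\}|$; everything else is formal. Since this bijection is classical, there is no real obstacle, and both formulas follow cleanly from the character-theoretic setup already established in Theorem~\ref{thm:RPS-GAP} together with Equation~\eqref{eqn:chainsBn}.
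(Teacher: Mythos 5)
Your proof is correct and follows essentially the route the paper indicates (the paper itself only cites Stanley's argument): write the chain-module characteristic $\alpha_n(J)=h_\mu$ as in \eqref{eqn:chainsBn}, expand in the Schur basis, and obtain Item (2) from Item (1) by the inclusion--exclusion of Theorem~\ref{thm:RPS-GAP}, where the inner sum $\sum_{V\subseteq U\subseteq J}(-1)^{|J|-|U|}=(1-1)^{|J|-|V|}$ indeed collapses to $\delta_{V,J}$. The only cosmetic difference is that you certify $K_{\lambda,\mu}=|\{\tau:\Des(\tau)\subseteq J\}|$ via the classical standardisation bijection on semistandard tableaux, whereas the paper points to Robinson--Schensted insertion; these are interchangeable here, and your version is, if anything, slightly more self-contained.
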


Now consider the $\sym_n^{\times t}$-module $M_J$ of maximal chains in the rank-selected $t$-fold Segre power $B_n^{(t)}(J)$.  Denote its product Frobenius characteristic by $\alpha^{(t)}_n(J)$. Again it is clear that the chains are transitively permuted by $\sym_n^{\times t}$.  An element $x$ of the chain $c$ is of the form $x=(J_1, J_2, \ldots, J_t)$ where every $J_i$ has the same cardinality $n_x$ for some $n_x\in J$.  The stabiliser of the chain $c$ is thus 
$(\sym_{j_1}\times \sym_{j_2-j_1}\times\cdots\times \sym_{j_r-j_{r-1}}\times\sym_{n-j_r})^{\times t}$,
and hence, by definition of the product Frobenius characteristic map $\Pch$, we have 
\[\alpha^{(t)}_n(J)=\Pch\, M_J =\prod_{i=1}^t (h_{j_1} h_{j_2-j_1}\cdots h_{j_r-j_{r-1}}h_{n-j_r})(X^i).\]

Finally we obtain the analogue of Theorem~\ref{thm:RPS-ranks} for the $t$-fold Segre power $B_n^{(t)}$.  Recall that Theorem~\ref{thm:Bnt-irreps} gives  a possibly virtual  expression for the decomposition of $\Phi_t(s_\lambda)$ into irreducibles. However, by Theorem~\ref{thm:RPS-GAP},   
 $\alpha^{(t)}_n(J)$ and $\beta^{(t)}_n(J)$ are indeed the product Frobenius characteristics of true $\sym_n^{\times t}$-modules, namely the module of maximal chains of the rank-selected subposet $B_n^{(t)}(J)$ and the top homology module of $B_n^{(t)}(J)$. 

\begin{thm}\label{thm:tfoldSegreBn-ranks} For any subset $J$ of nontrivial ranks of 
$B_n^{(t)}$, the homomorphism $\Phi_t$ maps the Frobenius characteristic of the $\sym_n$-action on the chains of $B_n(J)$ to the product Frobenius characteristic of the $\sym_n^{\times t}$-action on the chains of $B_n^{(t)}(J)$.  More precisely,  we have:
\begin{enumerate}
\item $\alpha^{(t)}_n(J)=\Phi_t(\alpha_n(J))=\sum_{\lambda\vdash n} \Phi_t(s_\lambda)\ |\{ SYT\ \tau \text{ of shape  } \lambda:  \Des(\tau)\subseteq J\}|;$
\item $\beta^{(t)}_n(J)=\Phi_t(\beta_n(J))=\sum_{\lambda\vdash n} \Phi_t(s_\lambda)\ |\{ SYT\ \tau \text{ of shape  } \lambda:  \Des(\tau)=J\}|.$
\end{enumerate}
\end{thm}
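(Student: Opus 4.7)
The plan is to deduce the theorem from the Solomon--Stanley description of the rank-selected modules of the Boolean lattice (Theorem~\ref{thm:RPS-ranks}) by applying the algebra homomorphism $\Phi_t$ of Definition~\ref{def:map-Phi-S}. The key identity to establish is
\[\alpha_n^{(t)}(J)=\Phi_t(\alpha_n(J))\qquad\text{and}\qquad \beta_n^{(t)}(J)=\Phi_t(\beta_n(J)),\]
after which the theorem is immediate from applying $\Phi_t$ to each line of Theorem~\ref{thm:RPS-ranks}.

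For Item (1), write $J=\{1\le j_1<\cdots<j_r\le n-1\}$. From the chain-stabiliser computation just before the statement of the theorem,
\[\alpha_n^{(t)}(J)=\prod_{i=1}^{t}\bigl(h_{j_1}h_{j_2-j_1}\cdots h_{j_r-j_{r-1}}h_{n-j_r}\bigr)(X^{i}).\]
Comparing with \eqref{eqn:chainsBn} and using that $\Phi_t$ is a ring homomorphism with $\Phi_t(h_m)=\prod_{i=1}^t h_m(X^i)$, we obtain $\alpha_n^{(t)}(J)=\Phi_t(\alpha_n(J))$. Applying $\Phi_t$ to the formula of Theorem~\ref{thm:RPS-ranks}(1) then gives Item (1).

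For Item (2) I would invoke Theorem~\ref{thm:RPS-GAP} twice. Since both $B_n$ and $B_n^{(t)}$ are Cohen--Macaulay (the latter by iterated application of the Bj\"orner--Welker result), and since $\sym_n^{\times t}$ acts as a group of automorphisms on $B_n^{(t)}$, the equivariant M\"obius inversion of Theorem~\ref{thm:RPS-GAP} yields, at the level of characters and hence of (product) Frobenius characteristics,
\[\beta_n(J)=\sum_{U\subseteq J}(-1)^{|J|-|U|}\alpha_n(U),\qquad \beta_n^{(t)}(J)=\sum_{U\subseteq J}(-1)^{|J|-|U|}\alpha_n^{(t)}(U).\]
Using $\alpha_n^{(t)}(U)=\Phi_t(\alpha_n(U))$ from the first step together with the linearity of $\Phi_t$, the second identity becomes $\beta_n^{(t)}(J)=\Phi_t(\beta_n(J))$. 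Applying $\Phi_t$ to Theorem~\ref{thm:RPS-ranks}(2) then completes the proof.

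There is no serious obstacle: the content has already been assembled in the preceding sections. The only subtlety worth checking carefully is that the $\sym_n^{\times t}$-module of maximal chains in $B_n^{(t)}(J)$ is transitive with the stabiliser asserted above (a direct consequence of the fact that an element of rank $j_s$ in $B_n^{(t)}$ is a $t$-tuple of $j_s$-subsets of $[n]$, each coordinate permuted independently by the corresponding factor of $\sym_n^{\times t}$), so that its product Frobenius characteristic is indeed $\Phi_t(\alpha_n(J))$; once this is in hand, the remainder of the argument is purely formal.
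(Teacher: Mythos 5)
Your proposal is correct and follows essentially the same route as the paper: establish $\alpha_n^{(t)}(J)=\Phi_t(\alpha_n(J))$ from the transitive chain action with Young-subgroup-power stabiliser, deduce $\beta_n^{(t)}(J)=\Phi_t(\beta_n(J))$ via the equivariant inversion of Theorem~\ref{thm:RPS-GAP} and the linearity of $\Phi_t$, then apply $\Phi_t$ to Theorem~\ref{thm:RPS-ranks}. As a minor point in your favor, your inversion formula correctly carries the signs $(-1)^{|J|-|U|}$, which the corresponding display in the paper's proof omits (an evident typo, since Theorem~\ref{thm:RPS-GAP} supplies them).
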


\begin{proof} It follows immediately from the preceding discussion, by applying the homomorphism $\Phi_t$ to \eqref{eqn:chainsBn}, that 
\[\alpha^{(t)}_n(J)=\prod_{i=1}^t (h_{j_1} h_{j_2-j_1}\cdots h_{j_r-j_{r-1}}h_{n-j_r})(X^i)=\Phi_t(\alpha_n(J)).\]
Hence from Theorem~\ref{thm:RPS-GAP} we obtain 
\[\beta^{(t)}_n(J)=\sum_{U\subseteq J}\alpha^{(t)}_n(U)=\sum_{U\subseteq J}\Phi_t(\alpha_n(U))=\Phi_t(\beta_n(J)).\]
Invoking Theorem~\ref{thm:RPS-ranks} now finishes the proof.\qedhere
\end{proof}

A skew-shape is said to be  \emph{connected}  \cite[p. 345]{ec2} if every two consecutive rows in its Ferrers diagram overlap in at least one square.  
Recall that a ribbon (also called a  border strip \cite{Macd1995, ec2}, or rim hook)  is a connected skew-shape with no 2 by 2 square. Equivalently, a ribbon is a skew-shape where two consecutive rows overlap in exactly one square. 

It follows from  Solomon's result \cite{SolJAlg1968}  that if $J=\{1\le j_1<j_2<\cdots<j_r\le n-1\}$, then $\beta_n(J)$ is the (Frobenius characteristic of the) $\sym_n$-module indexed by the  ribbon  whose Ferrers diagram has rows of lengths $j_1, j_2-j_1, \ldots , n-j_r$,  top to bottom. 
  In particular when $J$ consists of $k$ consecutive ranks starting at the first rank, $J=[k]$, then $\beta_n(J)$ corresponds to the $\sym_n$-irreducible indexed by the hook shape $(n-k, 1^k)$.

We can now make the following addition to Proposition~\ref{prop:Phit-Schur-pos-cases}.

\begin{cor}\label{cor:Phi-t-Schur-pos2} 
Let $f=s_{(n-k, 1^k)}$ be the Schur function indexed by a hook $(n-k, 1^k)$, or more generally the  Schur function indexed by a ribbon corresponding to the set $J=\{1\le j_1<j_2<\cdots<j_r\le n-1\}$, that is, whose rows top to bottom are $j_1, j_2-j_1, \ldots, n-j_r$.  Then $\Phi_t(f)$ is the product Frobenius characteristic of a true $\sym_n^{\times t}$-module.
\end{cor}
\begin{proof} This is immediate from the preceding remarks and the fact that $\Phi_t(\beta_n(J))=\beta_n^{(t)}(J)$. 
\end{proof}

Theorem~\ref{thm:tfoldSegreBn-ranks} can also be deduced by explicitly deriving a recurrence for the rank-selected homology, 
  using the Whitney homology technique  of \cite{SundaramAIM1994}. Indeed,  for the Boolean lattice itself (cf. \cite[Theorem~1.10, Example~1.11]{SundaramAIM1994}), if $J=\{1\le j_1<\cdots < j_r\le n-1\}$ is a subset of nontrivial ranks in $B_n$,  we have that 
\begin{align*}
\beta_n(J) +\beta_n(J\setminus\{j_r\})&=\ch \Whit_{j_r}(B_n(J))\\
&=\ch \bigoplus_{x:|x|=j_r}\tilde{H}(\hat 0, x)_{B_n(J)}.
\end{align*}
Since the interval $(\hat 0, x)_{B_n(J)}$ is isomorphic to the rank-selected subposet $B_{j_r}(J\setminus\{j_r\}) $, checking stabilisers gives  the recurrence 
\begin{equation}\label{eqn:rank-select-Bn-homology}
\beta_n(J) +\beta_n(J\setminus\{j_r\})=
\beta_{j_r}(J\setminus\{j_r\})\, h_{n-j_r}.
\end{equation}

Similarly, for the rank-selected homology representation $\beta^{(t)}_n(J)$ of the $t$-fold Segre power $B_n^{(t)}$, we obtain, by an analysis completely analogous to the above, and the one carried out in the proof of Theorem~\ref{thm:def-rec-tfoldSegreBn}, or alternatively by simply applying the homomorphism $\Phi_t$ to~\eqref{eqn:rank-select-Bn-homology}:

\begin{thm}\label{thm:rank-selected-homology-Bnt} Let $J=\{1\le j_1<\cdots < j_r\le n-1\}$ be a subset of nontrivial ranks in $B_n^{(t)}$.  The product Frobenius characteristic $\Pch\, \tilde{H}(B_n^{(t)}(J))$ of the rank-selected subposet of $B_n^{(t)}$ satisfies the following recurrence.
\begin{equation*}
\beta^{(t)}_n(J) +\beta^{(t)}_n(J\setminus\{j_r\})=
\beta^{(t)}_{j_r}(J\setminus\{j_r\})\, \prod_{i=1}^t h_{n-j_r}(X^i).
\end{equation*}
\end{thm}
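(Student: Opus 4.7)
My plan is to mirror, mutatis mutandis, the Whitney homology argument sketched just before the statement for the Boolean lattice, with the ordinary Frobenius characteristic $\ch$ replaced by the product Frobenius characteristic $\Pch$. The key ingredients are Theorem~\ref{thm:Whit} applied to the rank-selected subposets of $B_n^{(t)}$, together with the multiplicativity of $\Pch$ with respect to the induction product established in Proposition~\ref{prop:HomomorphismPch}.

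The first step is to establish the Whitney identity
$$\beta^{(t)}_n(J)+\beta^{(t)}_n(J\setminus\{j_r\})=\Pch\,\Whit_{j_r}(B_n^{(t)}(J)),$$
where, following the notation used before \eqref{eqn:rank-select-Bn-homology}, $\Whit_{j_r}$ denotes the top Whitney module $\bigoplus_{x:\rk(x)=j_r}\tilde H(\hat 0, x)_{B_n^{(t)}(J)}$. To see this, I would apply Theorem~\ref{thm:Whit} to the Cohen-Macaulay posets $Q=B_n^{(t)}(J)$ (of rank $r+1$) and $Q'=B_n^{(t)}(J\setminus\{j_r\})$ (of rank $r$). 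For each Whitney index $k<r$, the elements at $Q$-rank $k$ (equivalently, original rank $j_k$) are identical in $Q$ and $Q'$, and the open interval $(\hat 0, x)$ below such an element is determined by $J\cap[1,j_k-1]=(J\setminus\{j_r\})\cap[1,j_k-1]$; hence the lower Whitney homology modules agree as $\sym_n^{\times t}$-modules. Adding the two virtual-module identities produced by Theorem~\ref{thm:Whit} then causes all Whitney contributions below index $r$ to cancel, leaving only $\Whit_{j_r}(Q)$ on the right.

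The second step computes $\Whit_{j_r}(Q)$ as an induction product. For each $x=(x_1,\ldots,x_t)$ with $|x_i|=j_r$, the interval $(\hat 0,x)_Q$ is poset-isomorphic to the rank-selected Segre power $B_{j_r}^{(t)}(J\setminus\{j_r\})$. The stabiliser of $x$ in $\sym_n^{\times t}$ is $(\sym_{j_r}\times\sym_{n-j_r})^{\times t}$, in which the $\sym_{n-j_r}$ factors act pointwise trivially on the interval, while the $(\sym_{j_r})^{\times t}$ factor acts on its top homology as the representation with product Frobenius characteristic $\beta^{(t)}_{j_r}(J\setminus\{j_r\})$. Summing over the $\sym_n^{\times t}$-orbit of $x$ identifies
$$\Whit_{j_r}(Q)\;\cong\;\left[\tilde H(B_{j_r}^{(t)}(J\setminus\{j_r\}))\otimes(1_{\sym_{n-j_r}})^{\otimes t}\right]\uparrow_{(\sym_{j_r}\times\sym_{n-j_r})^{\times t}}^{\sym_n^{\times t}}.$$
By Corollary~\ref{cor:all-parts-equal}, this induced module equals the induction product $\tilde H(B_{j_r}^{(t)}(J\setminus\{j_r\}))\circ(1_{\sym_{n-j_r}})^{\otimes t}$, and Proposition~\ref{prop:HomomorphismPch} then yields
$$\Pch\,\Whit_{j_r}(Q)=\beta^{(t)}_{j_r}(J\setminus\{j_r\})\cdot\prod_{i=1}^t h_{n-j_r}(X^i),$$
which, combined with the first step, is the desired recurrence.

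The main obstacle will be the virtual-module bookkeeping in the first step: one must verify that the lower Whitney modules agree as genuine $\sym_n^{\times t}$-modules (not merely as vector spaces) and track the alternating signs in Theorem~\ref{thm:Whit} carefully, so that the cancellation leaves exactly $\Whit_{j_r}(Q)$. Everything else is a routine adaptation of the Boolean case of \eqref{eqn:rank-select-Bn-homology}; the identification of intervals and stabilisers inside the Segre power is straightforward, and Proposition~\ref{prop:HomomorphismPch} together with Corollary~\ref{cor:all-parts-equal} are precisely the tools constructed in Section~\ref{sec:prod_Frob} to convert such induced modules into products of characteristics.
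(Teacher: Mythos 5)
Your proposal is correct and follows essentially the same route as the paper: the paper's proof is exactly the Whitney homology technique of \cite{SundaramAIM1994} applied as in Theorem~\ref{thm:def-rec-tfoldSegreBn}, identifying the intervals below rank-$j_r$ elements with $B_{j_r}^{(t)}(J\setminus\{j_r\})$, checking that the stabiliser is $(\sym_{j_r}\times\sym_{n-j_r})^{\times t}$ with the $\sym_{n-j_r}$ factors acting trivially, and converting the induced module via Corollary~\ref{cor:all-parts-equal} and Proposition~\ref{prop:HomomorphismPch}. Your self-contained derivation of the identity $\beta^{(t)}_n(J)+\beta^{(t)}_n(J\setminus\{j_r\})=\Pch\,\Whit_{j_r}(B_n^{(t)}(J))$ (two applications of Theorem~\ref{thm:Whit} with cancellation of the lower Whitney modules, whose equality as $\sym_n^{\times t}$-modules you correctly flag) simply supplies the step the paper imports by citation from \cite{SundaramAIM1994}, and your sign bookkeeping checks out.
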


This is precisely the $\sym_n^{\times t}$-equivariant version of ~\eqref{eqn:actual-rec-mu-rank-select-Bnq} of Proposition~\ref{prop:rec-rank-select-Betti-Bnq}, a connection that will be exploited in the next section.

\section{The $t$-fold  Segre power of  the  subspace lattice: rank-selection and  stable principal specialisation}\label{sec:stable-ps}

 We will show  in this section that the surprising relationship discovered in \cite{YLiqCSV2023} between the homology representation of the Boolean Segre square  $B_n\circ B_n$ and the M\"obius number of the subspace lattice Segre square  $B_{n,q}\circ B_{n,q}$ holds in greater generality, namely for all rank-selected subposets of the $t$-fold Segre powers in each case.

Recall that $\beta_n^{(t)}:=\Pch(\tilde{H}_{n-2}(B_n^{(t)}))$, where $B_n^{(t)}$ is the $t$-fold Segre power of $B_n$. The recurrence for $\beta_n^{(t)}$ is 
\begin{equation}\label{eqn:rec-repn-tfold-subsetlattice} 
\beta^{(t)}_n=\sum_{i=0}^{n-1} (-1)^{n-1-i}\ \beta^{(t)}_i\prod_{i=1}^t h_{n-i}(X^i)  
\end{equation}
The \emph{stable principal specialisation} \cite[Chapter 7, Section 8]{ec2} of a symmetric function $f$ in variables $x_1,x_2,\ldots$ is the function of $q$ obtained from $f$ by means of the substitution $x_i\rightarrow q^{i-1}, i\ge 1$.  Similarly, we consider the stable principal specialisation of a function in $\otimes_{i=1}^t \Lambda(X^i)$ to be the function of $q$ obtained by replacing each set of variables $X^i$ by the set $\{1,q,q^2,\ldots \}$. Using the stable principal specialisations $\ps \beta_n^{(t)}$ and $\ps h_n:=h_n(1,q,q^2,\dots )=\prod_{i=1}^n (1-q^i)^{-1}$ \cite[Proposition 7.8.3]{ec2}, from ~\eqref{eqn:rec-repn-tfold-subsetlattice} we have the following identity for $t\ge 1$. 
\begin{equation}\label{eqn:rec-qps-repn-tfold-subsetlattice} 
\ps\beta^{(t)}_n=\sum_{i=0}^{n-1} (-1)^{n-1-i} \left(  \prod_{j=1}^{n-i} (1-q^j)^{-1}\right)^t \ps\beta^{(t)}_i
\end{equation}

The extension of \cite[Theorem~4.2]{YLiqCSV2023} to $t$-fold products is now immediate.  Note the particular case $t=1$, for which $W_n^{(1)}(q)=q^{\binom{n}{2}}$ and $\ps e_n= 
\frac{q^{\binom{n}{2}}}{\prod_{j=1}^n(1-q^j)}$.
\begin{prop}\label{prop:qps-equals-mu-qBn} We have, for $t\ge 1$, 
\[\ps \Pch(\tilde{H}_{n-2}(B_n^{(t)}))=\ps\beta^{(t)}_n=\frac{W^{(t)}_n(q)}{\prod_{j=1}^n(1-q^j)^t}.\]
\end{prop}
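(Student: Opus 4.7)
The plan is a short induction on $n$, comparing the recurrence~\eqref{eqn:rec-qps-repn-tfold-subsetlattice} satisfied by $\ps\beta_n^{(t)}$ with the recurrence~\eqref{eqn:rec-mu-tfold-subspacelattice} satisfied by $W_n^{(t)}(q)$. Set $g_n := W_n^{(t)}(q)\big/\prod_{j=1}^n (1-q^j)^t$. I would check the base case $n=0$ first: both sides equal $1$, since $\beta_0^{(t)}=1$ and $W_0^{(t)}(q)=1$ by convention, and the empty products in the denominators are $1$.

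For the induction step, I would take the recurrence~\eqref{eqn:rec-mu-tfold-subspacelattice} and divide both sides by $\prod_{j=1}^n(1-q^j)^t$. Using the factorisation
\[
{n\brack i}_q=\frac{\prod_{j=1}^n(1-q^j)}{\prod_{j=1}^i(1-q^j)\cdot \prod_{j=1}^{n-i}(1-q^j)},
\]
the quantity $\dfrac{{n\brack i}_q^t\,W_i^{(t)}(q)}{\prod_{j=1}^n(1-q^j)^t}$ collapses to
\[
\frac{W_i^{(t)}(q)}{\prod_{j=1}^i(1-q^j)^t}\cdot \frac{1}{\prod_{j=1}^{n-i}(1-q^j)^t}
=\left(\prod_{j=1}^{n-i}(1-q^j)^{-1}\right)^t g_i.
\]
Summing over $i$ shows that $g_n$ obeys exactly the same recurrence as $\ps\beta_n^{(t)}$ in~\eqref{eqn:rec-qps-repn-tfold-subsetlattice}, with the same initial value $g_0=\ps\beta_0^{(t)}=1$. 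Hence $g_n=\ps\beta_n^{(t)}$ for all $n$, which is the claim.

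The computation is essentially routine; there is no real obstacle beyond verifying that the stable principal specialisation $\ps h_{n-i}=\prod_{j=1}^{n-i}(1-q^j)^{-1}$ (already used in deriving~\eqref{eqn:rec-qps-repn-tfold-subsetlattice}) lines up cleanly with the denominator of the $q$-binomial coefficient after clearing $\prod_{j=1}^n(1-q^j)^t$. I would present the argument as a two-line induction after recording these identities.
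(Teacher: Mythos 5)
Your proposal is correct and is essentially the paper's own argument: the paper likewise divides the recurrence~\eqref{eqn:rec-mu-tfold-subspacelattice} by $\prod_{j=1}^n(1-q^j)^t$, uses the identity ${n\brack i}_q\prod_{j=i+1}^n(1-q^j)^{-1}=\ps h_{n-i}$ to match it term by term with~\eqref{eqn:rec-qps-repn-tfold-subsetlattice}, and concludes by equality of initial conditions. The only cosmetic difference is that the paper also verifies the redundant case $n=1$, whereas your check of $n=0$ alone already suffices since both recurrences hold for all $n\ge 1$.
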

    
\begin{proof} 
From~\eqref{eqn:q-bin-REMOVE}, we obtain 
\[{n\brack i}_q\cdot \prod_{j=i+1}^n (1-q^j)^{-1}= \prod_{j=1}^{n-i} (1-q^j)^{-1} =\ps h_{n-i}. \]
Thus, dividing~\eqref{eqn:rec-mu-tfold-subspacelattice}
throughout by $\prod_{j=1}^n(1-q^j)^t$, we obtain 
\[\frac{W^{(t)}_n(q)}{\prod_{j=1}^n(1-q^j)^t}=\sum_{i=0}^{n-1} (-1)^{n-1-i} \left(\prod_{j=1}^{n-i} (1-q^j)^{-1}\right)^t\frac{W^{(t)}_i(q)}{\prod_{j=1}^i (1-q^j)^t},\]
showing that $\frac{W^{(t)}_n(q)}{\prod_{j=1}^n(1-q^j)^t}$ and $\ps\beta_n^{(t)}$ satisfy the same recurrence.  
They also satisfy the same initial conditions, since $\beta_0^{(t)}=1$ 
implies $\ps \beta_0^{(t)}=1=W_0^{(t)}(q), $ and $\beta_1^{(t)}=h_1^t$ 
implies $\ps \beta_1^{(t)}=\frac{1}{(1-q)^t}=\frac{W_1^{(t)}(q)}{(1-q)^t}.$  This completes the proof.
    \end{proof}

Now we turn to rank-selection.   Recall that we denote by $\beta_n^{(t)}(J)$ the product Frobenius characteristic of the top homology $\tilde{H}_{k-1}(B_n^{(t)}(J))$ of the rank-selected subposet of $B_n^{(t)}$ corresponding to the rank-set $J=\{1\le j_1<\cdots<j_r\le n-1\}$.  Similarly, for the $t$-fold Segre power of the subspace lattice $B_{n,q}$, in Section~\ref{sec:Segre-EL} we denoted by $\tilde\beta_{B_{n,q}^{(t)}}(J)$ 
its rank-selected Betti number, i.e., the dimension of the top homology module of the rank-selected subposet  $B_{n,q}^{(t)}(J)$ corresponding to the rank-set $S$. Thus $\tilde\beta_{B_{n,q}^{(t)}}(J)$
is the absolute value of the M\"obius number of $B_{n,q}^{(t)}(J)$.  In particular, 
from Theorem~\ref{thm:rank-sel-beta-inv-N-of-q},
  $W^{(t)}_n(q)=\tilde\beta_{B_{n,q}^{(t)}}([n-1])$.
Using the recurrences in Theorem~\ref{thm:rank-selected-homology-Bnt} and Equation~\eqref{eqn:actual-rec-mu-rank-select-Bnq},  we can now derive the rank-selected analogue of Proposition~\ref{prop:qps-equals-mu-qBn}.

\begin{thm}\label{thm:ps-rank-selection} The stable principal specialisation of $\beta_n^{(t)}(J)$ for the rank-selected homology module of the $t$-fold Segre power of the Boolean lattice $B_n$, and the rank-selected Betti number  $\tilde\beta_{B_{n,q}^{(t)}}(J)$
of the $t$-fold Segre power of the subspace lattice $B_{n,q}$, are related by the equation
\begin{equation*} \ps \beta_n^{(t)}(J)= 
\frac{\tilde\beta_{B_{n,q}^{(t)}}(J)}{\prod_{i=1}^n(1-q^i)^t}.
\end{equation*}
\end{thm}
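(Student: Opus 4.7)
The plan is to prove this by induction on $|J|$, comparing the recurrence from Theorem~\ref{thm:rank-selected-homology-Bnt} (for the Boolean $t$-fold Segre power) with the recurrence from Proposition~\ref{prop:rec-rank-select-Betti-Bnq} (for the subspace $t$-fold Segre power), after applying the stable principal specialisation $\ps$ and multiplying through by $\prod_{i=1}^n(1-q^i)^t$. This is essentially the same pattern as the proof of Proposition~\ref{prop:qps-equals-mu-qBn}, generalised to arbitrary rank-sets $J$.

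For the base case $J=\emptyset$, the rank-selected subposet consists only of $\hat 0$ and $\hat 1$, so its top homology is one-dimensional and affords the trivial $\sym_n^{\times t}$-module. Therefore $\beta_n^{(t)}(\emptyset)=\prod_{j=1}^t h_n(X^j)$, and since $\ps h_n=\prod_{i=1}^n(1-q^i)^{-1}$, we obtain $\ps\beta_n^{(t)}(\emptyset)=\prod_{i=1}^n(1-q^i)^{-t}$. On the subspace side, $\tilde\beta_{B_n^{(t)}(q)}(\emptyset)=1$, so the asserted identity holds when $J=\emptyset$.

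For the inductive step, write $J=\{j_1<\cdots<j_r\}$ with $r\ge 1$ and set $J':=J\setminus\{j_r\}$. Applying $\ps$ to the recurrence of Theorem~\ref{thm:rank-selected-homology-Bnt} and using $\ps\prod_{i=1}^t h_{n-j_r}(X^i)=\prod_{j=1}^{n-j_r}(1-q^j)^{-t}$, I would multiply through by $\prod_{i=1}^n(1-q^i)^t$ and invoke the factorisation
\[
{n\brack j_r}_q^t=\frac{\prod_{i=1}^n(1-q^i)^t}{\prod_{j=1}^{j_r}(1-q^j)^t\,\cdot\,\prod_{j=1}^{n-j_r}(1-q^j)^t}
\]
to rewrite the right-hand term as ${n\brack j_r}_q^t\cdot\bigl(\ps\beta_{j_r}^{(t)}(J')\cdot\prod_{j=1}^{j_r}(1-q^j)^t\bigr)$. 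By the induction hypothesis applied to the pairs $(n,J')$ and $(j_r,J')$, both of which have smaller rank-set cardinality, the quantities $\ps\beta_n^{(t)}(J')\cdot\prod_{i=1}^n(1-q^i)^t$ and $\ps\beta_{j_r}^{(t)}(J')\cdot\prod_{j=1}^{j_r}(1-q^j)^t$ equal $\tilde\beta_{B_n^{(t)}(q)}(J')$ and $\tilde\beta_{B_{j_r}^{(t)}(q)}(J')$ respectively. Substituting these yields precisely the recurrence of Proposition~\ref{prop:rec-rank-select-Betti-Bnq}, and solving for the remaining term gives $\ps\beta_n^{(t)}(J)\cdot\prod_{i=1}^n(1-q^i)^t=\tilde\beta_{B_n^{(t)}(q)}(J)$, as required.

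The main work lies entirely in the algebraic bookkeeping of the factors $(1-q^j)^t$; conceptually there is no serious obstacle, since both sides are determined by two parallel linear recurrences with matching initial conditions, and the homomorphism $\Phi_t$ together with the effect of $\ps$ on each $h_m$ converts one recurrence into the other.
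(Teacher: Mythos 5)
Your proposal is correct and takes essentially the same route as the paper's proof: both apply $\ps$ to the recurrence of Theorem~\ref{thm:rank-selected-homology-Bnt}, match it against the recurrence of Proposition~\ref{prop:rec-rank-select-Betti-Bnq} after accounting for the factor $\prod_{i=1}^n(1-q^i)^t$ via the $q$-binomial factorisation, and conclude by induction on $|J|$. The only (harmless) difference is in the base case: you anchor the induction at $J=\emptyset$ using $\beta_n^{(t)}(\emptyset)=\prod_{j=1}^t h_n(X^j)$ and $\tilde\beta_{B_n^{(t)}(q)}(\emptyset)=1$, whereas the paper instead verifies the singleton case $J=\{r\}$ explicitly.
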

\begin{proof}
We  verify that  each side of the above equation satisfies the same recurrence. For a fixed subset $J$ of the nontrivial ranks $[1,n-1]$, the rank-selected subposet $B_n^{(t)}(J)$ is defined to be the bounded poset $\{x\in B_n^{(t)}: \rk(x)\in J\}$ with the top and bottom elements $\hat 0, \hat 1$ appended. 
First, taking the principal specialisation in Theorem~\ref{thm:rank-selected-homology-Bnt} gives us the following recurrence for the left-hand side, $\ps \beta_n^{(t)}(J)$. 

\begin{equation}\label{eqn:ps-rank-selected-homology-Bnt} 
\ps \beta_n^{(t)}(J) + \ps \beta_n^{(t)}(J\setminus\{j_r\}) 
=\ps \beta_{j_r}^{(t)}(J\setminus\{j_r\})\cdot (\ps h_{n-j_r})^t.
\end{equation}

For the right-hand side of the statement,  use the recurrence for $\tilde\beta_{B_{n,q}^{(t)}}(J)$ in 
Equation~\eqref{eqn:actual-rec-mu-rank-select-Bnq}, which  may be restated in the equivalent form 
\begin{equation}\label{eqn:rec-mu-rank-select-Bnq} 
\tilde\beta_{B_{n,q}^{(t)}}(J) +\tilde\beta_{B_{n,q}^{(t)}}(J\setminus\{j_r\})
 = \tilde\beta_{B_{j_r}^{(t)}(q)}(J\setminus\{j_r\}  )\cdot (\ps h_{n-j_r})^t\,\cdot \prod_{i=1+j_r}^n(1-q^i)^t.
\end{equation}

Dividing  \eqref{eqn:rec-mu-rank-select-Bnq}  by $\prod_{i=1}^n (1-q^i)^t$  and comparing with the recurrence \eqref{eqn:ps-rank-selected-homology-Bnt} for the principal specialisation     immediately shows that  the expressions 
$\ps \beta^{(t)}_n(J)$ and $\frac{\tilde\beta_{B_{n,q}^{(t)}}(J)}{\prod_{i=1}^n (1-q^i)^t}$ 
satisfy the same recurrence on subsets $J\subseteq [n-1]$.  Note also that the theorem has been established when $J=[n-1]$. 

Now consider the case when $J$ consists of a single rank $\{r\}$. Then we have 
\[\beta^{(t)}_n(J)=\prod_{j=1}^t h_{r}(X^j) \prod_{j=1}^t h_{n-r}(X^j) 
- \prod_{j=1}^t h_{n}(X^j), 
\]
\begin{equation*}
\begin{split}
\text{ and so }\quad \ps \beta^{(t)}_n(J)&=\left(\prod_{i=1}^r (1-q^i)^{-1} \prod_{i=1}^{n-r} (1-q^i)^{-1}\right)^t
- (\prod_{i=1}^n (1-q^i)^{-1})^t\\
&=\prod_{i=1}^n (1-q^i)^{-t} \left({n\brack r}_q^t-1\right)\\
&=\frac{\tilde\beta_{B_{n,q}^{(t)}}(J)}{\prod_{i=1}^n(1-q^i)^t}
.
\end{split}
\end{equation*}
An inductive argument on the size of $J$ now completes the proof.
\end{proof}

\section{Further questions}\label{sec:End}

In a future paper we examine the diagonal $\sym_n$-action on the (rank-selected) homology of the $t$-fold Segre power of $B_n$ (Theorem~\ref{thm:Sn-diag-action-homology-Bn-t}) more closely, investigating the conjectures at the end of Section~\ref{sec:tfold-Boolean-lattice-repn}.  It would also be interesting to see what more can be said about the map $\Phi_t$.

In \cite{RPSGaP1982}, Stanley examined the rank-selected homology of the subspace lattice. A logical  next step is to carry out the program of this paper for the action of $GL_n(q)$ on the Segre powers of the subspace lattice.


\end{document}